\providecommand{\U}[1]{\protect\rule{.1in}{.1in}}
\newtheorem{theorem}{Theorem}[section]
\newtheorem{corollary}[theorem]{Corollary}
\newtheorem{example}[theorem]{Example}
\newtheorem{lemma}[theorem]{Lemma}
\newtheorem{remark}[theorem]{Remark}
\begin{document}

\title{Optimal control of forward-backward mean-field stochastic delayed systems}
\author{\ Nacira AGRAM\thanks{Faculty of economic sciences and management, University
Med Khider, Po. Box 145, Biskra $\left(  07000\right)  $ Algeria. Email:
agramnacira@yahoo.fr} \ and Elin Engen Røse\thanks{Department of Mathematics,
University of Oslo, Box 1053 Blindern, N-0316 Oslo, Norway. Email:
elinero@math.uio.no}\ \ \ \ \
\and \textit{Dedicated to Professor Bernt Øksendal on the occasion of his 70th
birthday}}
\date{\ 16 December 2014}
\maketitle

\begin{abstract}
We study methods for solving stochastic control problems of systems of
forward-backward mean-field equations with delay, in finite or infinite
horizon. Necessary and sufficient maximum principles under partial information
are given. The results are applied to solve a recursive utility optimal problem.

\end{abstract}

\bigskip\textsf{Keywords: Optimal control; Stochastic delay equation;
Mean-field; Stochastic maximum principle; Hamiltonian; Advanced stochastic
equation; Partial information. \newline\newline2010 Mathematics Subject
Classification: \newline Primary 93EXX; 93E20; 60J75; 34K50 \newline Secondary
60H10; 60H20; 49J55}

\section{Introduction}

Stochastic differential equations involving a large number of interacting
particles can be approximated by mean-field stochastic differential equations
(MFSDEs). Solutions of MFSDEs typically occurs as a limit in law of an
increasing number of identically distributed interacting processes, where the
coefficient depends on an average of the corresponding processes. See e.g.
\cite{carmona}. Even more general MFSDEs with delay can be used to model brain
activity in the sense of interactions between cortical columns (i.e. large
populations of neurons). As an example in \cite{Touboul}, they consider a
model of the form%
\[
dX(t,r)=f(t,x)dt+\int_{\Gamma}\mathbb{E}[b(r,r^{\prime},x,X(t-\tau
(r,r^{\prime}))(r^{\prime}))]_{x=X(t,r)}\lambda(dr^{\prime})dt+\sigma
(r)dB(t,r)
\]

Such equations are also used in systemic risk theory and other areas as it is
mentioned in \cite{HOS}. In \cite{MS}, they consider a problem of stochastic
optimal control of forward mean-field delayed equations, they derive necessary
and sufficient maximum principles accordingly and by using continuous
dependence theorems, they prove existence and uniqueness of MF-FSDDEs and
MF-ABSDEs. We emphasize that our paper has similarities with \cite{SH} but in
our case we include delay and jumps and also our type of mean-field equation
is different from theirs.

\section{Finite horizon stochastic mean-field optimal control problem}

Consider a complete filtered probability space $\left(  \Omega,\mathfrak{F}%
,\{\mathfrak{F}_{t}\}_{t\geq0},P\right)  $ on which we define a standard
Brownian motion $B(\cdot)$ and an independent compensated Poisson random
measure $\tilde{N}$, such that $%
\begin{array}
[c]{c}%
\tilde{N}(dt,de):=N(dt,de)-\nu(de)dt,
\end{array}
$where $N(dt,de)$ is the jump measure, $\nu$ is the Lévy measure and
$\nu(de)dt$ is the compensator of $N$. The information available to the
controller may be less than the overall information.

Let $\delta>0$. We want to control a process given by a following pair of
FBSDEs with delay%
\begin{equation}%
\begin{array}
[c]{c}%
dX(t)=b(t,\mathbf{X}(t),\pi(t),\omega)dt+\sigma(t,\mathbf{X}(t),\pi
(t),\omega)dB(t)\\
+\int_{%
%TCIMACRO{\U{211d} }%
%BeginExpansion
\mathbb{R}
%EndExpansion
_{0}}\gamma(t,\mathbf{X}(t^{\mathbf{-}}),\pi(t^{\mathbf{-}}),e,\omega
)\tilde{N}(dt,de),t\in\lbrack0,T],
\end{array}
\label{eq1}%
\end{equation}

\begin{equation}%
\begin{array}
[c]{c}%
dY(t)=-g(t,\mathbf{X}(t),Y(t),Z(t),\pi(t),\omega)dt+Z(t)dB(t)\\
+\int_{%
%TCIMACRO{\U{211d} }%
%BeginExpansion
\mathbb{R}
%EndExpansion
_{0}}K(t,e,\omega)\tilde{N}(de,dt),t\in\lbrack0,T],
\end{array}
\label{EQ1}%
\end{equation}

with initial condition $X(t)=X_{0}(t),t\in\lbrack-\delta,0]$ and terminal
condition $Y(T)=aX(T)$ which $a$ is a given constant in $\mathbb{R}_{0},$
where
\[
\mathbf{X}(t):=(1,...,N):=\Big(\int_{-\delta}^{0}X(t+s)\mu_{1}(ds),\ldots
,\int_{-\delta}^{0}X(t+s)\mu_{3}(ds)\Big)
\]
for bounded Borel measures $\mu_{i}(ds),i=1,2,3$ that are either Dirac
measures or absolutely constant.

\begin{example}
Suppose $\mu:=\mu_{1}$ {\normalsize and }${\normalsize N=1}$

\begin{enumerate}
\item If $\mu$ is the Dirac measure concentrated at $0$, then $\mathbf{X}%
(t):=X(t)$, and the state equation is a SDE.

\item If $\mu$ is the Dirac measure concentrated at $-\delta$, then
$\mathbf{X}(t):=X(t-\delta)$, and the state equation is a SDE with discrete delay.

\item If $\mu(ds)=e^{\lambda s}ds$, then $\mathbf{X}(t):=\int_{-\delta}%
^{0}e^{\lambda s}X(s)ds$, and the state equation is a SDE with distributed delay.
\end{enumerate}
\end{example}

Here
\begin{align*}
b  &  =\,\,b(\omega,t,\mathbf{x},\pi)\,\,\,\,\,:\Omega\times\lbrack
0,T]\times\mathbb{R}^{3}\times\times U\longrightarrow\mathbb{R},\\
\sigma &  =\sigma(\omega,t,\mathbf{x},\pi)\,\,\,:\Omega\times\lbrack
0,T]\times\mathbb{R}^{3}\times U\longrightarrow\mathbb{R},\\
\gamma &  =\gamma(\omega,t,\mathbf{x},\pi,e)\,:\Omega\times\lbrack
0,T]\times\mathbb{R}^{3}\times U\times\mathbb{R}_{0}\longrightarrow
\mathbb{R},\\
g  &  =g(\omega,t,\mathbf{x},y,z,k(\cdot),\pi)\,:\Omega\times\lbrack
0,T]\times\mathbb{R}^{3}\times\mathbb{R}\times L^{2}(\mathbb{R}_{0})\times
U\longrightarrow\mathbb{R},\\
K  &  =K(\omega,t,e):\Omega\times\lbrack0,T]\times\mathbb{R}_{0}%
\rightarrow\mathbb{R}.
\end{align*}

Let $b,\sigma,g$ and $\gamma$ are given $\mathfrak{F}_{t}$-measurable for all
$\mathbf{x,}y,z\in\mathbb{%
%TCIMACRO{\U{211d} }%
%BeginExpansion
\mathbb{R}
%EndExpansion
},$ $u\in U$ where $U$ is a convex subset of $%
%TCIMACRO{\U{211d} }%
%BeginExpansion
\mathbb{R}
%EndExpansion
$\ and $e\in\mathbb{R}_{0}.$ The set $U$ consists of the admissible control
values. The information available to the controller is given by a
sub-filtration $\mathbb{G}=\{\mathfrak{G}_{t}\}_{t\geq0}$ such that
$\mathfrak{G}_{t}\subseteq\mathfrak{F}_{t}.$ The set of admissible controls,
i.e. the strategies available to the controller is given by a subset
$\mathcal{A}_{\mathbb{G}}$ of the càdlàg, $U$-valued and $\mathfrak{G}_{t}%
$-adapted processes in $L^{2}(\Omega\times\lbrack0,T]).$

\textbf{Assumption (I)}

\begin{description}
\item[i)] The functions $b,\sigma,\gamma$ and $g$ are assumed to be $C^{1}$
(Fréchet) for each fixed $t,$ $\omega$ and $e$.

\item[ii)] \textit{Lipschitz condition:} The functions $b,\sigma$ and $g$ are
Lipschitz continuous in the variables $\mathbf{x},y,z,$ with the Lipschitz
constant independent of the variables $t,u,w$. Also, there exits a function
$L\in L^{2}(\nu)$ independent of $t,u,w$, such that%
\begin{equation}%
\begin{array}
[c]{c}%
\left\vert \gamma(t,\mathbf{x},u,e,\omega)-\gamma(t,\mathbf{x}^{\prime
},u,e,\omega)\right\vert \\
\leq L(e)\left\vert \mathbf{x}-\mathbf{x}^{\prime}\right\vert \mathbf{.}%
\end{array}
\end{equation}

\item[iii)] \textit{Linear growth:} The functions $b,\sigma,g$ and $\gamma$
satisfy the linear growth condition in the variables with the linear growth
constant independent of the variables $t,u,w$. Also there exists a
non-negative function $L^{\prime}\in L^{2}(\nu)$ independent of $t,u,w$, such
that%
\begin{equation}
\left\vert \gamma(t,\mathbf{x},u,e,\omega)\right\vert \leq L^{\prime
}(e)\left(  1+\left\vert \mathbf{x}\right\vert \right)  .
\end{equation}

\end{description}

The optimal control associated to this problem is to optimize the objective
function of the form%

\begin{align}
J(u)  &  =\mathbb{E}\left[  \int_{0}^{T}f(t,\mathbf{X}(t),\mathbb{E[}%
\Phi(X(t))],Y(t),Z(t),K(t,\cdot),\pi(t),\omega)dt\right. \label{EQ2}\\
&  \left.  +h_{1}(Y(0))+h_{2}(X(T),\mathbb{E[}\psi(X(T))])\right]  ,\nonumber
\end{align}

over the admissible controls, for functions%

\[%
\begin{array}
[c]{l}%
f:[0,T]\times\mathbb{R}^{6}\times\mathcal{\phi\times U}\times\Omega
\rightarrow\mathbb{R},\\
\Phi:[0,T]\times\mathbb{R}\times\Omega\rightarrow\mathbb{R},\\
h_{1}:\mathbb{R}\times\Omega\rightarrow\mathbb{R},\\
h_{2}:\mathbb{R}\times\mathbb{R}\times\Omega\rightarrow\mathbb{R},\\
\psi:\mathbb{R}\times\Omega\rightarrow\mathbb{R}.
\end{array}
\]

That is, to find an optimal control $u^{\ast}$ $\in\mathcal{A}_{\mathbb{G}}$
such that%
\begin{equation}
J(u^{\ast})=\underset{u\in\mathcal{A}_{\mathbb{G}}}{\sup}J(u). \label{EQ3}%
\end{equation}

For now, the functions $f,\Phi,\psi$ $,h_{i},i=1,2$ are assumed to satisfy the
following assumptions.

\textbf{Assumption (II)}

\begin{description}
\item[i)] The functions $f(t,\cdot,\omega),\Phi(t,\cdot,\omega),\psi
(\cdot,\omega)$ $,h_{i}(t,\cdot,\omega),i=1,2$ are $C^{1}$ for each $t$ and
$\omega$.

\item[ii)] Integrability condition%
\begin{equation}%
\begin{array}
[c]{l}%
\mathbb{E}\left[  {\int\limits_{0}^{T}}\left\{  \left\vert f\left(
t,\mathbf{X}(t),\mathbb{E[}\Phi(X(t))],Y(t),Z(t),K(t,\cdot),\pi(t)\right)
\right\vert \right.  \right. \\
+\left.  \left.  \left\vert \dfrac{\partial f}{\partial x_{i}}\left(
t,\mathbf{X}(t),\mathbb{E[}\Phi(X(t))],Y(t),Z(t),K(t,\cdot),\pi(t)\right)
\right\vert ^{2}\right\}  dt\right]  <\infty.
\end{array}
\end{equation}

\end{description}

\subsection{The Hamiltonian and adjoint equations}

Let $\mathcal{\phi}$ denote the set of (equivalence classes) measurable
functions $r:%
%TCIMACRO{\U{211d} }%
%BeginExpansion
\mathbb{R}
%EndExpansion
\rightarrow%
%TCIMACRO{\U{211d} }%
%BeginExpansion
\mathbb{R}
%EndExpansion
_{0}$ such that%
\begin{equation}%
\begin{array}
[c]{c}%
%TCIMACRO{\dint _{\mathbb{R}_{0}}}%
%BeginExpansion
{\displaystyle\int_{\mathbb{R}_{0}}}
%EndExpansion
\left\{  \underset{(\mathbf{x},u)\in L}{\sup}\left\vert \gamma(t,\mathbf{x}%
,u,e,\omega)r(e)\right\vert \right. \\
\text{ \ \ \ \ \ \ \ \ \ \ \ \ \ \ \ \ \ \ \ \ \ \ \ \ \ }\left.
+\underset{(\mathbf{x},u)\in L}{\sup}\left\vert \nabla\gamma(t,\mathbf{x}%
,u,e,\omega)r(e)\right\vert \right\}  \nu(de)<\infty
\end{array}
\end{equation}

for each $t\in\lbrack0,T]$ and every bounded $L\subset\mathbb{R}\times U,$
$P-$a.s. This integrability condition ensures that whenever $r\in
\mathcal{\phi},$%
\begin{equation}
\nabla\int_{%
%TCIMACRO{\U{211d} }%
%BeginExpansion
\mathbb{R}
%EndExpansion
_{0}}\gamma(t,\mathbf{x},u,e,\omega)r(e)\nu(de)=\int_{%
%TCIMACRO{\U{211d} }%
%BeginExpansion
\mathbb{R}
%EndExpansion
_{0}}\nabla\gamma(t,\mathbf{x},u,e,\omega)r(e)\nu(de),
\end{equation}

and similarly for $K(t,\cdot)$.

\begin{example}
We notice that if the linear growth condition%
\[
\left\vert \gamma(t,\mathbf{x},u,e,\omega)\right\vert +\left\vert \nabla
\gamma(t,\mathbf{x},u,e,\omega)\right\vert \leq L(e)\left\{  1+\left\vert
\mathbf{x}\right\vert +\left\vert u\right\vert \right\}
\]

holds for some $L\in L^{2}(\nu)$ independent of $t,w,$ then $L^{2}(\nu
)\subset\mathcal{\phi},$ and this will be the case in section necessary
maximum principle.
\end{example}

Now we define the Hamiltonian associated to this problem, for $\Omega
\times\lbrack0,T]\times\mathbb{R}^{N}\times\mathbb{R}\times\mathbb{R}%
\times\mathbb{R}\times L^{2}(\nu)\times U\times\mathbb{R}\times\mathbb{R}%
\times L^{2}(\nu)\times\mathbb{R}\rightarrow\mathbb{R};$%

\begin{equation}%
\begin{array}
[c]{l}%
H(t,\mathbf{x,}m\mathbf{,}y,z,k(\cdot),u,p,q,r(\cdot),\lambda)\\
=f(t,\mathbf{x},m,y,z,k,u)+b(t,\mathbf{x},u)p+\sigma(t,\mathbf{x},u)q\\
+g(t,\mathbf{x,}y,z,u)\lambda+\gamma(t,\mathbf{x,}u,e)r(e)\nu(de)
\end{array}
\label{EQ4}%
\end{equation}

where $m=\mathbb{E[}\psi(x)].$

The adjoint equations for all $t\in\lbrack0,T]$ are defined as follows%

\begin{align}
dp(t) &  =\mathbb{E[}\Upsilon\mathbb{(}t)\mid\mathfrak{F}_{t}\mathbb{]}%
dt+q(t)dB(t)+{\int\limits_{\mathbb{R}_{0}}}r(t,e)\tilde{N}(dt,de),\label{EQ5}%
\\
d\lambda(t) &  =\frac{\partial H}{\partial y}\left(  t\right)  dt+\frac
{\partial H}{\partial z}\left(  t\right)  dB(t)+{\int\limits_{\mathbb{R}_{0}%
}\nabla_{k}H}(t,e)\tilde{N}(dt,de),\label{EQ6}%
\end{align}

with, terminal condition%
\begin{align*}
p(T)  &  =a\lambda(T)+\frac{\partial h_{2}}{\partial x}(X(T),\mathbb{E[}%
\psi(X(T))])\\
&  +\frac{\partial h_{2}}{\partial n}(X(T),\mathbb{E[}\psi(X(T))])\psi
^{\prime}(X(T)),a\in\mathbb{R}_{0},
\end{align*}

initial condition $\lambda(0)=h_{1}^{^{\prime}}(Y(0))$ for all $t\in
\lbrack0,T],$ and%

\begin{equation}
\Upsilon(t)=-\sum_{i=0}^{2}\int_{-\delta}^{0}\left\{  \frac{\partial
H}{\partial x_{i}}\Big(t-s,\pi\Big)\right\}  \mu_{i}(ds)-\mathbb{E}%
\Big[\frac{\partial H}{\partial m}\Big(t-s,\pi\Big)\Big]\Phi^{\prime
}(X(t)).\label{EQ7}%
\end{equation}

we denote by $\dfrac{\partial H}{\partial x_{i}},\dfrac{\partial H}{\partial
y},\dfrac{\partial H}{\partial m},\dfrac{\partial h_{2}}{\partial x}%
,\dfrac{\partial h_{2}}{\partial n},$ the partial derivatives of $H$ and $h$
w.r.t. $(x,y,m)$ and $x_{i},n$ resp. and ${\nabla_{k}H}$\ is the Fréchet
derivative of $H$ w.r.t. $k$. Throughout this work, it would be useful to
introduce the simplified notation
\[
h_{2}(T)=h_{2}(X(T),\mathbb{E[}\psi(X(T))]).
\]

\begin{example}
{\normalsize Suppose $\mu:=\mu_{1}$ }

\begin{enumerate}
\item {\normalsize If $\mu$ is the Dirac measure concentrated at $0$, then
\[
\Upsilon(t):=-\frac{\partial H}{\partial x}(t,\pi)-\mathbb{E}\Big[\frac
{\partial H}{\partial m}(t,\pi)\Big].
\]
}

\item {\normalsize If $\mu$ is the Dirac measure concentrated at $\delta$,
then
\[
\Upsilon(t):=-\frac{\partial H}{\partial x}(t+\delta,\pi)-\mathbb{E}%
\Big[\frac{\partial H}{\partial m}(t+\delta,\pi)\Big].
\]
}

\item {\normalsize If $\mu(ds)=e^{\lambda s}ds$, then
\begin{align}
\Upsilon(t):  &  =-\int_{-\delta}^{0}\frac{\partial H}{\partial x}%
(t-s,\pi)e^{\lambda s}(ds)-\mathbb{E}\Big[\frac{\partial H}{\partial m}%
(t,\pi)\Big]\\
\color{red}  &  =-\int_{t-\delta}^{t}\frac{\partial H}{\partial x}%
(-s,\pi)e^{\lambda(s-t)}(ds)-\mathbb{E}\Big[\frac{\partial H}{\partial
m}(t,\pi)\Big]\color{black}.
\end{align}
}
\end{enumerate}
\end{example}

\begin{remark}
The existence and uniqueness of mean-field FBSDEs with delay is beyond the
scope of this paper, and is a topic for future research. We refer to
\cite{Elin}.
\end{remark}

\subsection{A sufficient maximum principle}

When the Hamiltonian $H$ and the functions $(h_{i})_{i=1,2}$ are concave,
under certain other limitations, it is also possible to derive a sufficient
maximum principle.

\begin{theorem}
Let $\hat{\pi}\in\mathcal{A}_{\mathbb{G}}$ with corresponding state processes
$\hat{X},\hat{Y},\hat{Z},\hat{K}(\cdot)$ and adjoint processes $\hat{p}%
,\hat{q},\hat{r}(\cdot)$ and $\hat{\lambda}$. Suppose the following holds:

\begin{enumerate}
\item[1.] (Concavity) The functions%
\begin{equation}%
%TCIMACRO{\U{211d} }%
%BeginExpansion
\mathbb{R}
%EndExpansion
\times%
%TCIMACRO{\U{211d} }%
%BeginExpansion
\mathbb{R}
%EndExpansion
\ni(x,n)\longmapsto h_{i}(x,n),i=1,2
\end{equation}
and%
\begin{equation}
\mathbb{R}^{10}\times L^{2}(\nu)\times U\ni(\mathbf{x},m,y,z,k,u)\longmapsto
H(t,\cdot,\hat{p}(t),\hat{q}(t),\hat{r}(t,\cdot),\hat{\lambda}(t))
\end{equation}

\end{enumerate}

are concave $P-$a.s. for each $t\in\lbrack0,T].$

\begin{enumerate}
\item[2.] (Maximum principle)%
\begin{align}
&  \mathbb{E}\left[  H\left(  t,\mathbf{\hat{X}}(t),\hat{M}(t),\hat{Y}%
(t),\hat{Z}(t),\hat{K}(t,\cdot),\hat{\pi}(t),\hat{p}(t),\hat{q}(t),\hat
{r}(t,\cdot),\hat{\lambda}(t)\right)  \Big\vert\mathfrak{G}_{t}\right]
\nonumber\\
&  =\underset{v\in U}{\sup}\mathbb{E}\left[  H\left(  t,\mathbf{\hat{X}%
}(t),\hat{M}(t),\hat{Y}(t),\hat{Z}(t),\hat{K}(t,\cdot),v,\hat{p}(t),\hat
{q}(t),\hat{r}(t,\cdot),\hat{\lambda}(t)\right)  \Big\vert\mathfrak{G}%
_{t}\right]  , \label{EQ8}%
\end{align}

\end{enumerate}

$P-$a.s. for each $t\in\lbrack0,T].$

Then $\pi$ is an optimal control for the problem $\left(  \ref{EQ3}\right)  .$
\newline
\end{theorem}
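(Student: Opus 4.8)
The plan is to compare the candidate control with an arbitrary competitor and to show that the difference of the objective values is nonnegative. Fix $\pi\in\mathcal{A}_{\mathbb{G}}$ with state processes $X,Y,Z,K$, write $\tilde{X}=\hat{X}-X$, $\tilde{Y}=\hat{Y}-Y$, $\tilde{Z}=\hat{Z}-Z$, $\tilde{K}=\hat{K}-K$, and put a hat on $b,\sigma,\gamma,g,f,H$ and their derivatives to signal evaluation along $(\hat{X},\hat{Y},\hat{Z},\hat{K},\hat{\pi})$ together with the adjoint processes $(\hat{p},\hat{q},\hat{r},\hat{\lambda})$. First I would split $J(\hat{\pi})-J(\pi)$ into the running part $\mathbb{E}\int_{0}^{T}(\hat{f}-f)\,dt$, the initial part $\mathbb{E}[h_{1}(\hat{Y}(0))-h_{1}(Y(0))]$ and the terminal part $\mathbb{E}[h_{2}(\hat{X}(T),\cdot)-h_{2}(X(T),\cdot)]$, and then eliminate $f$ in favour of the Hamiltonian through the defining identity $f=H-bp-\sigma q-g\lambda-\int_{\mathbb{R}_{0}}\gamma r\,\nu(de)$ from \eqref{EQ4}.

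The core computation is an application of the It\^{o} formula to the two products $\hat{p}\tilde{X}$ and $\hat{\lambda}\tilde{Y}$ on $[0,T]$, followed by taking expectations. Since the initial segment $X_{0}$ is common to both controls, $\tilde{X}\equiv 0$ on $[-\delta,0]$, so the boundary term at $t=0$ for $\hat{p}\tilde{X}$ vanishes; for $\hat{\lambda}\tilde{Y}$ the initial condition $\hat{\lambda}(0)=h_{1}'(\hat{Y}(0))$ produces the term $\mathbb{E}[h_{1}'(\hat{Y}(0))\tilde{Y}(0)]$, which I pair with the concavity estimate $h_{1}(\hat{Y}(0))-h_{1}(Y(0))\geq h_{1}'(\hat{Y}(0))\tilde{Y}(0)$. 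At the terminal time I use $Y(T)=aX(T)$, hence $\tilde{Y}(T)=a\tilde{X}(T)$, together with the terminal condition for $\hat{p}(T)$ to convert $\mathbb{E}[\hat{p}(T)\tilde{X}(T)]$ into $\mathbb{E}[\hat{\lambda}(T)\tilde{Y}(T)]$ plus exactly the two $h_{2}$-derivative terms that the concavity of $h_{2}$ (and the structure of $\psi$ inside its mean-field argument) absorbs. Expanding the two differentials, cancelling the martingale parts, and using the tower property (legitimate because $\tilde{X}(t)$ is $\mathfrak{F}_{t}$-measurable, so $\mathbb{E}[\hat{\Upsilon}\mid\mathfrak{F}_{t}]$ may be replaced by $\hat{\Upsilon}$ under the expectation) rewrites the running cost as the integral of $\hat{H}-H$ minus the gradient pairings in $y,z,k$ and the delay/mean-field pairing $\tilde{X}\,\hat{\Upsilon}$.

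The delay-specific step is to process $\mathbb{E}\int_{0}^{T}\tilde{X}(t)\hat{\Upsilon}(t)\,dt$ with $\hat{\Upsilon}$ given by \eqref{EQ7}. Performing the time shift $t\mapsto t-s$ under $\mu_{i}(ds)$, with the convention that the integrand vanishes outside $[0,T]$, I turn $\sum_{i}\int_{-\delta}^{0}\frac{\partial\hat{H}}{\partial x_{i}}(t-s)\mu_{i}(ds)$ paired against $\tilde{X}(t)$ into $\sum_{i}\frac{\partial\hat{H}}{\partial x_{i}}(t)$ paired against $\tilde{\mathbf{X}}_{i}(t)=\int_{-\delta}^{0}\tilde{X}(t+s)\mu_{i}(ds)$; the vanishing of $\tilde{X}$ on $[-\delta,0]$ is precisely what kills the shifted boundary contributions, and a Fubini interchange justifies the reordering. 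The mean-field part of $\hat{\Upsilon}$ is reorganised so that it matches the $\frac{\partial\hat{H}}{\partial m}\tilde{m}$ term generated by concavity. After this, every gradient pairing in $\mathbf{x},m,y,z,k$ cancels against the corresponding term of the first-order concavity inequality $\hat{H}-H\geq\frac{\partial\hat{H}}{\partial\mathbf{x}}\tilde{\mathbf{X}}+\frac{\partial\hat{H}}{\partial m}\tilde{m}+\frac{\partial\hat{H}}{\partial y}\tilde{Y}+\frac{\partial\hat{H}}{\partial z}\tilde{Z}+\int_{\mathbb{R}_{0}}\nabla_{k}\hat{H}\,\tilde{K}\,\nu(de)+\frac{\partial\hat{H}}{\partial u}(\hat{\pi}-\pi)$, leaving only $\mathbb{E}\int_{0}^{T}\frac{\partial\hat{H}}{\partial u}(\hat{\pi}-\pi)\,dt$.

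Finally, hypothesis (2) forces this surviving term to be nonnegative: since $v\mapsto\mathbb{E}[H(\cdot,v)\mid\mathfrak{G}_{t}]$ is maximised over the convex set $U$ at $v=\hat{\pi}(t)$, the first-order condition gives $\mathbb{E}[\frac{\partial\hat{H}}{\partial u}\mid\mathfrak{G}_{t}](\hat{\pi}(t)-\pi(t))\geq 0$ for every $\mathfrak{G}_{t}$-measurable $\pi(t)\in U$, and conditioning on $\mathfrak{G}_{t}$ together with the $\mathfrak{G}_{t}$-measurability of $\hat{\pi}-\pi$ upgrades this to $\mathbb{E}\int_{0}^{T}\frac{\partial\hat{H}}{\partial u}(\hat{\pi}-\pi)\,dt\geq 0$; hence $J(\hat{\pi})-J(\pi)\geq 0$ for all admissible $\pi$, which proves that $\hat{\pi}$ solves \eqref{EQ3}. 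I expect the genuine obstacle to be the delay change-of-variables together with keeping the mean-field terms aligned: justifying the time shift and the Fubini interchange under $\mu_{i}$, and, for the jump integrals, differentiation under the $\nu(de)$-integral (which is exactly what membership $r\in\mathcal{\phi}$ guarantees), is where the bookkeeping must be done with care, whereas the concavity estimates and the control-maximality step are routine.
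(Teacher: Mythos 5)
Your proposal follows essentially the same route as the paper's own proof: compare $J(\hat{\pi})-J(\pi)$ for an arbitrary admissible $\pi$, apply It\^{o}'s formula to $\hat{p}\,(\hat{X}-X)$ and $\hat{\lambda}\,(\hat{Y}-Y)$, invoke the concavity of $H$, $h_{1}$, $h_{2}$ together with the terminal condition $Y(T)=aX(T)$ and the adjoint boundary conditions, resolve the delay term by Fubini and the shift $r=t-s$ (using that $\hat{X}-X$ vanishes on $[-\delta,0]$), and conclude from the first-order condition of the conditional maximum over the convex set $U$. The only step the paper makes explicit that you pass over silently is the localization by an increasing family of stopping times so that the stochastic integrals appearing are true martingales with zero expectation; this is routine and does not affect the correctness of your argument.
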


\begin{proof}
By considering a suitable increasing family of stopping times converging to $T$ we may assume that all the local martingales appearing in the proof below are martingales. See the proof of Theorem 2.1 in \cite{OS2} for details. \\
Let $\pi$ be an arbitrary admissible control. Consider the difference
\begin{align}
&
\begin{array}
[c]{l}%
J(\hat\pi)-J(\pi)\\
=\mathbb{E}\left[
%TCIMACRO{\dint _{0}^{T}}%
%BeginExpansion
{\displaystyle\int_{0}^{T}}
%EndExpansion
\left\{  f(t,\mathbf{\hat{X}}(t),\mathbb{E[}\Phi(\hat{X}(t))],\hat{Y}%
(t),\hat{Z}(t),\hat{K}(t,\cdot),\hat\pi(t))\right.  \right. \\
\left.  -f(t,\mathbf{X}(t),\mathbb{E[}\Phi(X(t))],Y(t),Z(t),K(t,\cdot
),\pi(t))\right\}  dt\\
\left.  +h_{1}(\hat{Y}(0))-h_{1}(Y(0))+\hat{h}_{2}(T)-h_{2}(T)\right]
\end{array}
\nonumber\\
&
\begin{array}
[c]{c}%
=\mathbb{E}\left[
%TCIMACRO{\dint _{0}^{T}}%
%BeginExpansion
{\displaystyle\int_{0}^{T}}
%EndExpansion
\triangle\hat{f}(t)dt+\triangle\hat{h}_{1}(\hat{Y}(0))+\triangle\hat{h}%
_{2}(\hat{X}(T))\right]  ,
\end{array}
\label{EQ9}%
\end{align}
use the same simplified notation for $\triangle\hat{H}(t),\triangle
\mathbf{\hat{X}}(t)..$etc.
Since $H$ is concave, we have%
\begin{align}
\triangle \hat{H}(t)& \geq \sum_{i=0}^{2}\int_{-\delta }^{0}\triangle
X(t)\left\{ \frac{\partial H}{\partial x_{i}}\Big(t-s,\pi \Big)\right\} \mu
_{i}(ds)  \notag \\
& +\frac{\partial \hat{H}}{\partial m}(t)\mathbb{E[}\triangle \hat{\Phi}(%
\hat{X}(t))]+\frac{\partial \hat{H}}{\partial y}(t)\triangle \hat{Y}(t)
\notag \\
& +\frac{\partial \hat{H}}{\partial z}(t)\triangle \hat{Z}(t)+{\int\limits_{%
\mathbb{R}_{0}}\nabla _{k}}\hat{H}(t,e)\triangle \hat{K}(t,e)\nu (de)+\frac{%
\partial \hat{H}}{\partial \pi}(t)\triangle \hat{\pi}(t)  \notag \\
& \geq \sum_{i=0}^{2}\int_{-\delta }^{0}\triangle X(t)\left\{ \frac{\partial
H}{\partial x_{i}}\Big(t-s,\pi \Big)\right\} \mu _{i}(ds)  \notag \\
& +\frac{\partial \hat{H}}{\partial m}(t)\mathbb{E}\left[ \hat{\Phi}^{\prime
}(\hat{X}(t))\triangle \hat{X}(t)\right] +\frac{\partial \hat{H}}{\partial y}%
(t)\triangle \hat{Y}(t) \\
& +\frac{\partial \hat{H}}{\partial z}(t)\triangle \hat{Z}(t)+{\int\limits_{%
\mathbb{R}_{0}}\nabla _{k}}\hat{H}(t,e)\triangle \hat{K}(t,e)\nu (de)+\frac{%
\partial \hat{H}}{\partial \pi}(t)\triangle \hat{\pi}(t)  \label{EQ10}
\end{align}
By the concavity of $h_{i}(.)_{i=1,2}$, we find
\begin{equation}
\triangle\hat{h}_{1}(0)\geq\hat{h}_{1}^{\prime}(\hat{Y}(0))\triangle\hat
{Y}(0)=\hat{\lambda}(0)\triangle\hat{Y}(0) \label{EQ11}%
\end{equation}
and%
\begin{equation}
\triangle\hat{h}_{2}(T)\geq\triangle\hat{X}(T)\left(  \frac{\partial\hat
{h}_{2}}{\partial x}(T)+\frac{\partial\hat{h}_{2}}{\partial n}(T)\hat{\psi
}^{\prime}(\hat{X}(T))\right)  \label{EQ12}%
\end{equation}
Apply It\^{o}'s formula to $\hat{\lambda}(0)\triangle\hat{Y}(0)$, we get%
\begin{equation}%
\begin{array}
[c]{l}%
\mathbb{E[}\hat{\lambda}(0)\triangle\hat{Y}(0)]=\mathbb{E}\left[  \hat
{\lambda}(T)\triangle\hat{Y}(T)\right. \\
-%
%TCIMACRO{\dint _{0}^{T}}%
%BeginExpansion
{\displaystyle\int_{0}^{T}}
%EndExpansion
\left\{  -\hat{\lambda}(t)\triangle\hat{g}(t)+\triangle\hat{Y}(t)\dfrac
{\partial\hat{H}}{\partial y}(t)\right. \\
\left.  \left.  +\triangle\hat{Z}(t)\dfrac{\partial\hat{H}}{\partial z}(t)+%
%TCIMACRO{\dint \limits_{\mathbb{R}_{0}}}%
%BeginExpansion
{\displaystyle\int\limits_{\mathbb{R}_{0}}}
%EndExpansion
\nabla_{k}\hat{H}(t,e)\triangle\hat{K}(t,e)\nu(de)\right\}  dt\right] \\
=\mathbb{E}\left[  \triangle\hat{X}(T)\left(  \hat{p}(T)-\dfrac{\partial
\hat{h}_{2}}{\partial x}(T)-\dfrac{\partial\hat{h}_{2}}{\partial n}%
(T)\hat{\psi}^{\prime}(\hat{X}(T))\right)  \right. \\
+{\displaystyle\int_{0}^{T}}\left\{  \hat{\lambda}(t)\triangle\hat{g}(t)-\dfrac{\partial
\hat{H}}{\partial y}(t)\triangle\hat{Y}(t)\right. \\
\left.  \left.  -\dfrac{\partial\hat{H}}{\partial z}(t)\triangle\hat{Z}(t)-%
%TCIMACRO{\dint \limits_{\mathbb{R}_{0}}}%
%BeginExpansion
{\displaystyle\int\limits_{\mathbb{R}_{0}}}
%EndExpansion
\nabla_{k}\hat{H}(t,e)\triangle\hat{K}(t,e)\nu(de)\right\}  dt\right] \\
=\mathbb{E}\left[
%TCIMACRO{\dint _{0}^{T}}%
%BeginExpansion
{\displaystyle\int_{0}^{T}}
%EndExpansion
\left\{  \hat{p}(t)\triangle\hat{b}(t)+\triangle\hat{X}(t)\mathbb{E[}\hat{\Upsilon}(t)\mid\mathfrak F_{t}]\mathbb{]+}\triangle\hat{\sigma}(t)\hat{q}(t)\right.
\right. \\
+\int\limits_{\mathbb{R}_{0}}\triangle\hat{\gamma}(t,e)\hat{r}(t,e)\nu
(de)+\hat{\lambda}(t)\triangle\hat{g}(t)-\triangle\hat{X}(T)\left(
\dfrac{\partial\hat{h}_{2}}{\partial x}(T)-\dfrac{\partial\hat{h}_{2}%
}{\partial n}(T)\hat{\psi}^{\prime}(\hat{X}(T))\right) \\
\left.  \left.  -\dfrac{\partial\hat{H}}{\partial y}(t)\triangle\hat
{Y}(t)-\dfrac{\partial\hat{H}}{\partial z}(t)\triangle\hat{Z}(t)-%
%TCIMACRO{\dint \limits_{\mathbb{R}_{0}}}%
%BeginExpansion
{\displaystyle\int\limits_{\mathbb{R}_{0}}}
%EndExpansion
\nabla_{k}\hat{H}(t,e)\triangle\hat{K}(t,e)\nu(de)\right\}  dt\right] \\
=\mathbb{E}\left[
%TCIMACRO{\dint _{0}^{T}}%
%BeginExpansion
{\displaystyle\int_{0}^{T}}
%EndExpansion
\left\{  \triangle\hat{H}(t)-\triangle\hat{f}(t)+\triangle\hat{X}%
(t){\hat{\Upsilon}(}t)-\triangle\hat{X}(T)\left(  \dfrac{\partial\hat
{h}_{2}}{\partial x}(T)-\dfrac{\partial\hat{h}_{2}}{\partial n}(T))\hat{\psi
}^{\prime}(\hat{X}(T))\right)  \right.  \right. \\
\left.  \left.  -\dfrac{\partial\hat{H}}{\partial y}(t)\triangle\hat
{Y}(t)-\dfrac{\partial\hat{H}}{\partial z}(t)\triangle\hat{Z}(t)-%
%TCIMACRO{\dint \limits_{\mathbb{R}_{0}}}%
%BeginExpansion
{\displaystyle\int\limits_{\mathbb{R}_{0}}}
%EndExpansion
\nabla_{k}\hat{H}(t,e)\triangle\hat{K}(t,e)\nu(de)\right\}  dt\right]  .
\end{array}
\label{EQ25}
\end{equation}
By the definition of $\Upsilon\left(  \ref{EQ7}\right)  $\ and Fubini's theorem, we can show that
\begin{eqnarray}
&&\int_{0}^{T}\int_{-\delta }^{0}\triangle X(t)\left\{ \frac{\partial H}{%
\partial x_{i}}\Big(t-s,\pi \Big)\right\} \mu _{i}(ds)dt  \notag \\
&=&\int_{0}^{T}\int_{-\delta }^{0}\left\{ \frac{\partial H}{\partial x_{i}}%
\Big(t,\pi \Big)\right\} \triangle X(t+\delta )\mu _{i}(ds)dt  \label{nacira}
\end{eqnarray}
Let's perform the change of variable $r=t-s$ in the dt-integral to observe that
\small\begin{equation}
\begin{split}
\mathbb E\Big[&\int_0^T  \int_{-\delta}^0 X(t)\Big( \frac{\partial H}{\partial x_i}(t-s,\pi)\Big)\mu_i(ds) dt\Big]\\
&=\mathbb E\Big[\int_{-\delta}^0\int_0^T  X(t)\Big( \frac{\partial H}{\partial x_i}(t-s,\pi)\Big)dt\,\mu_i(ds) \Big]\\
&=\mathbb E\Big[\int_{-\delta}^0\int_s^T  X(t)\Big( \frac{\partial H}{\partial x_i}(t-s,\pi)\Big)dt\,\mu_i(ds) \Big]\\
&=\mathbb E\Big[\int_{-\delta}^0\int_0^T X(r+s)\Big( \frac{\partial H}{\partial x_i}(r,\pi) \Big)dt\,\mu_i(ds) \Big]\\
\color{red}
&=\mathbb E\Big[\int_0^T \int_{-\delta}^0 X(t+s)\Big( \frac{\partial H}{\partial x_i}(t,\pi) \Big)\mu_i(ds)\,dt \Big]\\
&=\mathbb E\Big[\int_{0}^T\int_{-\delta}^0 X(t+s)\Big( \frac{\partial H}{\partial x_i}(t,\pi) \Big)\,\mu_i(ds)dt \Big]
\end{split}
\end{equation}\normalsize
Putting $\left(  \ref{nacira}\right)  $ in $\left(  \ref{EQ25}\right)  $, and combining $\left(  \ref{EQ10}\right)
$ with $\left(  \ref{EQ9}\right)  $, we obtain%
\[
J(\hat\pi)-J(\pi)\geq\mathbb{E}\left[  \int_{0}^{T}\frac{\partial\hat{H}%
}{\partial \pi}(t)\triangle\hat\pi(t)dt\right]  \geq0,
\]
by the maximum condition of $H$ $\left(  \ref{EQ8}\right)  $.
\end{proof}

\section{Infinite horizon optimal control problem}

In this section, we extend the results obtained in the previous section to
infinite horizon. So it can be seen as a generalization to mean-field problems
of Theorems {\normalsize $3.1$ and $4.1$ } in \cite{DA} and \cite{Peng} resp.
By following the same steps in the previous section but now with infinite time
horizon, we consider that the state equations have the forms%

\begin{equation}
\left.
\begin{array}
[c]{l}%
dX(t)=b(t,\mathbf{X}(t),\mathbb{E[}\mathbf{X}(t)],\pi(t),\omega)dt+\sigma
(t,\mathbf{X}(t),\mathbb{E[}\mathbf{X}(t)],\pi(t),\omega)dB(t)\\
+\int_{%
%TCIMACRO{\U{211d} }%
%BeginExpansion
\mathbb{R}
%EndExpansion
_{0}}\gamma(t,\mathbf{X}(t^{\mathbf{-}}),\mathbb{E[}\mathbf{X}(t^{\mathbf{-}%
})],\pi(t^{\mathbf{-}}),e,\omega)\tilde{N}(dt,de),t\in\lbrack0,\infty),\\
X(t)=x_{0}(t),t\in\left[  -\delta,0\right]  ,
\end{array}
\right.  , \label{EQ27}%
\end{equation}
and%
\begin{equation}%
\begin{array}
[c]{l}%
dY(t)=-g(t,\mathbf{X}(t),\mathbb{E[}\mathbf{X}(t)],Y(t),\mathbb{E[}%
Y(t)],Z(t),K(t,\cdot),\pi(t))dt+Z(t)dB(t)\\
+\int_{%
%TCIMACRO{\U{211d} }%
%BeginExpansion
\mathbb{R}
%EndExpansion
_{0}}K(t,e,\omega)\tilde{N}(dt,de),t\in\lbrack0,\infty)
\end{array}
\label{EQ28}%
\end{equation}

which can be interpreted as in \cite{Pardoux} for all finite $T,$%
\begin{equation}%
\begin{array}
[c]{l}%
Y(t)=Y(T)+\int_{t}^{T}g(s,\mathbf{X}(s),\mathbb{E[}\mathbf{X}%
(s)],Y(s),\mathbb{E[}Y(s)],Z(s),K(s,\cdot),\pi(s))ds\\
+\int_{%
%TCIMACRO{\U{211d} }%
%BeginExpansion
\mathbb{R}
%EndExpansion
_{0}}K(t,e,\omega)\tilde{N}(dt,de),0\leq t\leq T,
\end{array}
\end{equation}

where
\[
\mathbf{X}(t):=(X_{1}(t),X_{2},(t),\ldots X_{N}(t)):=\Big(\int_{-\delta}%
^{0}X(t+s)\mu_{1}(ds),\ldots,\int_{-\delta}^{0}X(t+s)\mu_{N}(ds)\Big)
\]
for bounded Borel measures $\mu_{1},\ldots\mu_{N}(ds)$. We remark if $X$ is a
càdlàg process, then $\mathbf{X}$ is also càdlàg. We always assume that
coefficient functional $\gamma$ is evaluated for the predictable ( i.e. left
continuous) versions of the càdlàg processes $\mathbf{X},Y$ and $\pi$, and we
will omit the minus from the notation.
\begin{align*}
b=\,\,b(\omega,t,\mathbf{x},\mathbf{m},u)\,\,\,\,\,:  &  \Omega\times
\lbrack0,\infty)\times\mathbb{R}^{N}\times{\mathbb{R}}^{N}\times
U\longrightarrow\mathbb{R},\\
\sigma=\sigma(\omega,t,\mathbf{x},\mathbf{m},u)\,\,\,\,\,:  &  \Omega
\times\lbrack0,\infty)\times\mathbb{R}^{N}\times\mathbb{R}^{N}\times
U\longrightarrow\mathbb{R},\\
\gamma=\gamma(\omega,t,\mathbf{x},\mathbf{m},u,e):  &  \Omega\times
\lbrack0,\infty)\times\mathbb{R}^{N}\times{\mathbb{R}}^{N}\times
U\times\mathbb{R}_{0}\longrightarrow\mathbb{R},\\
g:=g(\omega,t,\mathbf{x},\mathbf{m},y,n,z,k(\cdot),u):  &  \Omega\times
\lbrack0,\infty)\times\mathbb{R}^{N}\times{\mathbb{R}}^{N}\times
\mathbb{R}\times\mathbb{R}\times\mathbb{R}\times L^{2}(\nu)\times
U\longrightarrow\mathbb{R}.
\end{align*}
We assume that the coefficient functional satisfy the following assumptions:

\textbf{Assumptions (III)}

\begin{enumerate}
%[i)]

\item The functions $b,\sigma, \gamma,g$ is $C^{1}$ (Fr\'echet) with respect
to all variables except $t$ and $\omega$.

\item The functions $b,\sigma,\gamma,g$ are jointly measurable.
\end{enumerate}

Let $U$ be a subset of $\mathcal{\phi}$. The set $U$ will be the admissible
control values. The information available to the controller is given by a
sub-filtration $\mathbb{G}=\{\mathfrak{G}_{t}\}_{t\in\lbrack0,T]}$ with
$\mathfrak{F}_{0}\subset\mathfrak{G}_{t}\subset\mathfrak{F}_{t}$.
%$\mathfrak G_t\subset \mathfrak F_t, t\in[0,T]$.

The set of admissible controls, that is, the set of controls that are
available to the controller, is denoted by $\mathcal{A}_{\mathbb{G}}$. It will
be a given subset of the càdlàg, $U$-valued and $\mathfrak{G}_{t}$-adapted
processes in $L^{2}(\Omega\times\lbrack0,\infty))$, such that there exists
unique càdlàg adapted processes $X=X^{\pi},Y=Y^{\pi}$, progressively
measurable $Z=Z^{\pi}$, and predictable $K=K^{\pi}$ satisfying (\ref{EQ27})
and (\ref{EQ28}), and if it also satisfyes {\small
\begin{equation}
\mathbb{E}\Big[\int_{0}^{\infty}\left\vert X(s)\right\vert ^{2}%
ds\Big]+\mathbb{E}\left[  \underset{t\geq0}{\sup\text{ }}e^{\kappa t}\left(
Y(t)\right)  ^{2}+%
%TCIMACRO{\tint \limits_{0}^{\infty}}%
%BeginExpansion
{\textstyle\int\limits_{0}^{\infty}}
%EndExpansion
e^{\kappa t}(\left(  Z(t)\right)  ^{2}+%
%TCIMACRO{\tint \limits_{\mathbb{R}_{0}}}%
%BeginExpansion
{\textstyle\int\limits_{\mathbb{R}_{0}}}
%EndExpansion
\left(  K\left(  t,e\right)  \right)  ^{2}\nu(de))dt\right]  <\infty
\label{growth}%
\end{equation}
}{\normalsize for some constant $\kappa>0.$ }

\subsection{{\protect\normalsize The Optimization problem}}

{\normalsize We want to maximise the performance functional }{\small
\[
J(\pi)=\mathbb{E}\left[  \int_{0}^{\infty}f(t,\mathbf{X}(t),\mathbb{E[}%
\mathbf{X}(t)],Y(t),\mathbb{E[}Y(t)],Z(t),K(t,\cdot),\pi(t))dt+h(Y(0))\right]
\]
}{\normalsize over the set $\mathcal{A}_{\mathbb{G}}$, for some functions
\[
f=f(\omega,t,\mathbf{x},\mathbf{m},y,n,z,k(\cdot),u):\Omega\times
\lbrack0,\infty)\times\mathbb{R}^{N}\times\mathbb{R}^{N}\times\mathbb{R}%
\times\mathbb{R}\times\mathcal{\phi}\times U\rightarrow\mathbb{R},
\]
and
\[
h=:\mathbb{R}\rightarrow\mathbb{R}.
\]
That is, we want to find $\pi^{\ast}\in\mathcal{A}_{\mathbb{G}}$ such that
\begin{equation}
\underset{\pi\in\mathcal{A}_{\mathbb{G}}}{\sup}J(\pi)=J(\pi^{\ast}).
\label{EQ30}%
\end{equation}
We assume that the functions $f$ and $h$ satisfy the following assumptions: }

{\normalsize \textbf{Assumptions (IV)} }

\begin{enumerate}
{\normalsize
%[i)]
}

\item {\normalsize The functions $f,h$ is $C^{1}$ (Fr\'echet) with respect to
all variables except $t$ and $\omega$. }

\item {\normalsize The function $f$ is predictable, and $h$ is $\mathfrak{F}%
$-measurable for fixed $\mathbf{x}, \mathbf{m}, y, n, z, k, u$. }
\end{enumerate}

\subsection{{\protect\normalsize The Hamiltonian and the adjoint equation}}

{\normalsize Define the Hamiltonian function
\[
H:\Omega\times\lbrack0,\infty)\times\mathbb{R}^{N}\times\mathbb{R}^{N}%
\times\mathbb{R}\times\mathbb{R}\times\mathbb{R}\times L^{2}(\nu)\times
U\times\mathbb{R}\times\mathbb{R}\times L^{2}(\nu)\times\mathbb{R}%
\rightarrow\mathbb{R}%
\]
by
\begin{equation}%
\begin{array}
[c]{l}%
H(t,\mathbf{x,}\mathbf{m}\mathbf{,}y,n,z,k(\cdot),u,p,q,r(\cdot),\lambda)\\
=b(t,\mathbf{x},\mathbf{m},u)p+\sigma(t,\mathbf{x},\mathbf{m},u)q+%
%TCIMACRO{\dint _{\mathbb{R}_{0}}}%
%BeginExpansion
{\displaystyle\int_{\mathbb{R}_{0}}}
%EndExpansion
\gamma(t,\mathbf{x,}m\mathbf{,}u,e)r(e)\nu(de)\\
+g(t,\mathbf{x,}\mathbf{m}\mathbf{,}y,n,z,k(\cdot),u)\lambda+f(t,\mathbf{x}%
,\mathbf{m},y,n,z,k(\cdot),u).
\end{array}
\label{haminf}%
\end{equation}
Now, to each admissible control $\pi$, we can define the adjoint processes
$p,q,r$ and $\lambda$ by the following system of forward-backward
equations:\newline}

{\normalsize \textbf{Backward equation} }%
\begin{equation}
\
dp(t)=-\mathbb{E}[\Upsilon(t)|\mathcal{F}_{t}]dt+q(t)dB(t)+\int_{\mathbb{R}%
_{0}}r(t,e)\tilde{N}(dt,de),
\
\label{eq:pqr}
\end{equation}
where{\small
\begin{equation}%
\begin{split}
\Upsilon(t)  &  =\sum_{i=0}^{N-1}\int_{-\delta}^{0}\frac{\partial H}{\partial
x_{i}}\Big(t-s,\mathbf{X}(t-s),\mathbb{E}[\mathbf{X}(t-s)],Y(t-s),\mathbb{E}%
[Y(t-s)],\\
&  \quad\quad Z(t-s),K(t-s),\pi(t-s),p(t-s),q(t-s),r(t-s)\Big)\mu_{i}(ds)\\
&  +\sum_{i=0}^{N-1}\int_{-\delta}^{0}\mathbb{E}\Big[\frac{\partial
H}{\partial m_{i}}\Big(t-s,\mathbf{X}(t-s),\mathbb{E}[\mathbf{X}%
(t-s)],Y(t-s),\mathbb{E}[Y(t-s)],\\
&  \quad\quad Z(t-s),K(t-s),\pi(t-s),p(t-s),q(t-s),r(t-s)\Big)\Big]\mu
_{i}(ds).
\end{split}
\label{eq:Upsilon}%
\end{equation}
}

{\normalsize \textbf{Forward equation} }{\small
\begin{equation}%
\begin{split}
d\lambda(t)  &  =\frac{\partial H}{\partial y}\Big(t,\mathbf{X}(t),\mathbb{E}%
[\mathbf{X}(t)],Y(t),\mathbb{E}[Y(t)],Z(t),K(t),\pi(t),p(t),q(t),r(t),\lambda
(t)\Big)\\
&  +\mathbb{E}\Big[\frac{\partial H}{\partial n}\Big(t,\mathbf{X}%
(t),\mathbb{E}[\mathbf{X}(t)],Y(t),\mathbb{E}[Y(t)],Z(t),K(t),\pi
(t),p(t),q(t),r(t),\lambda(t)\Big)\Big]dt\\
&  +\frac{\partial H}{\partial z}\Big(t,\mathbf{X}(t),\mathbb{E}%
[\mathbf{X}(t)],Y(t),\mathbb{E}[Y(t)],Z(t),K(t),\pi(t),p(t),q(t),r(t),\lambda
(t)\Big)dB(t)\\
&  +\int_{\mathbb{R}_{0}}\nabla_{k}H\Big(t,\mathbf{X}(t),\mathbb{E}%
[\mathbf{X}(t)],Y(t),\mathbb{E}[Y(t)],Z(t),K(t),\pi(t),p(t),q(t),r(t),\lambda
(t)\Big)(e)\tilde{N}(dt,de),\\
\lambda(0)  &  =h^{\prime}(Y(0)).
\end{split}
\label{eq:lambda}%
\end{equation}
}{\normalsize
%\end{description}
Here $\nabla_{k}H$ is used to denote the Fréchet derivative of $H$ with
respect to the variable $k(\cdot)$, and hence }{\small
\[
\nabla_{k}H\Big(t,\mathbf{X}(t),\mathbb{E}[\mathbf{X}(t)],Y(t),\mathbb{E}%
[Y(t)],Z(t),K(t),\pi(t),p(t),q(t),r(t),\lambda(t)\Big)\in L^{2}(\nu)^{\ast
}=L^{2}(\nu),
\]
}{\normalsize for fixed $\omega,\pi$ and corresponding $\mathbf{X}%
,Y,Z,K,p,q,r$ and $\lambda$. We notice also that the integrand
\[
\nabla_{k}H\Big(t,\mathbf{X}(t),\mathbb{E}[\mathbf{X}(t)],Y(t),\mathbb{E}%
[Y(t)],Z(t),K(t),\pi(t),p(t),q(t),r(t),\lambda(t)\Big)(e)
\]
is predictable.}

{\normalsize Notice also, $\Upsilon$ may not be adapted to $\mathfrak{F}_{t}$,
as $\Upsilon(t)$ is defined using values of $H$ at time $t-s$, where $s<0$. }

{\normalsize Given an admissible control $\pi$, suppose there exists
progressively measurable processes $p=p^{\pi},q=q^{\pi}$ and $\lambda
=\lambda^{\pi}$ and $r=r^{\pi}$, satisfying (\ref{eq:pqr})-(\ref{eq:lambda})
and such that
\begin{equation}
\mathbb{E}\left[  \underset{\text{ \ \ \ \ \ \ \ \ }}{\underset{t\geq
0}{\text{sup}}}e^{\kappa t}\left(  p(t)\right)  ^{2}+%
%TCIMACRO{\dint \limits_{0}^{\infty}}%
%BeginExpansion
{\displaystyle\int\limits_{0}^{\infty}}
%EndExpansion
\left\{  \left\vert \lambda(t)\right\vert ^{2}+e^{\kappa t}(\left(
q(t)\right)  ^{2}+%
%TCIMACRO{\tint \limits_{\mathbb{R}_{0}}}%
%BeginExpansion
{\textstyle\int\limits_{\mathbb{R}_{0}}}
%EndExpansion
\left(  r\left(  t,e\right)  \right)  ^{2}\nu(de))\right\}  dt\right]
<\infty\label{decayp}%
\end{equation}
}

{\normalsize for some constant $\kappa>0.$ Then, we say that $p,q,r$ and
$\lambda$ are adjoint equations to the Forward-Backward system (\ref{EQ27}%
)-(\ref{EQ28}).}

\subsection{{\protect\normalsize Short hand notation}}

{\normalsize When Adjoint processes exist, we will frequently use the
following short hand notation: }{\small
\begin{equation}
H(t,\pi):=H\big(t,\mathbf{X}^{\pi}(t),\mathbb{E}[\mathbf{X}^{\pi}(t)],Y^{\pi
}(t),\mathbb{E}[Y^{\pi}(t)],Z^{\pi}(t),K^{\pi}(t),\pi(t),p^{\pi}(t),q^{\pi
}(t),r^{\pi}(t)\big).
\end{equation}
}{\normalsize Similar notation will be used for the coefficient functions
$b,\sigma,\gamma,$ and $g$, and the functions $f,h$ from the performance
functional, and for derivatives of the mentioned functions. We will write
$\nabla H$ for the Fréchet derivative of $H$ with respect to the variables
${\mathbf{x}},{\mathbf{n}},y,n,z,k(\cdot)$. Notice that $\nabla H(t,\pi)$
applied to
\[
(\bar{\mathbf{x}},\bar{\mathbf{n}},\bar{y},\bar{n},\bar{z},\bar{k}(\cdot
),\bar{u})\in\mathbb{R}^{k}\times\mathbb{R}^{k}\times\mathbb{R}\times
\mathbb{R}\times\mathbb{R}\times L^{2}(\nu)\times U
\]
is given by }{\small
\begin{align*}
&  \nabla H(t,\pi)(\bar{\mathbf{x}},\bar{\mathbf{n}},\bar{y},\bar{n},\bar
{z},\bar{k}(\cdot),\bar{u})=\nabla_{\mathbf{x}}H(t,\pi)\bar{\mathbf{x}%
}^{\mathsf{T}}+\nabla_{\mathbf{m}}H(t,\pi)\bar{\mathbf{m}}^{\mathsf{T}}\\
&  \quad+\frac{\partial H}{\partial y}(t,\pi)\bar{y}+\frac{\partial
H}{\partial y}(t,\pi)\bar{n}+\frac{\partial H}{\partial z}(t,\pi)\bar{z}%
\int_{\mathbb{R}_{0}}\nabla_{k}H(t,\pi)(e)\bar{k}(e)\nu(de)+\frac{\partial
H}{\partial u}(t,\pi)\bar{u}.
\end{align*}
}{\normalsize where $\nabla_{\mathbf{x}}$ is the gradient (as a row vector)
with respect to the variable $\mathbf{x}$, etc. }

{\normalsize Using this notation, the state equations and the adjoint
equations can be written more compactly as }{\small
\begin{align}
&
\begin{alignedat}{2} dX(t)&=b(t,\pi)dt+\sigma(t,\pi)dB(t)+\int_{\mathbb{R}_{0}}\gamma(t,\pi,e)\tilde{N}(dt,de),t\in\lbrack0,\infty),\\ X(t)&=x_{0}(t),\quad t\in\left[ -\delta,0\right] , \end{alignedat}\\
&  \,\nonumber\\
&
\begin{alignedat}{2} dY(t)&=-g(t,\pi)dt+Z(t)dB(t)+\int_{\mathbb{R}_{0}}K(t,e,\omega)\tilde{N}(dt,de),\quad t\in\lbrack0,\infty), \end{alignedat}
\end{align}
}{\normalsize and }{\small
\begin{align}
&
\begin{alignedat}{2} dp(t)&=-\mathbb E[\Upsilon(t)|\mathfrak F_t ]\quad+q(t)dB(t)+ \int_{\mathbb R_0}r(e,t)\tilde N(dt, de),\\ &\textnormal{where}\\ \Upsilon(t)&=\sum_{i=0}^{N-1}\int_{-\delta}^0{{\frac{\partial H}{\partial x_i}(t-s,\pi)+ \mathbb E\Big[\frac{\partial H}{\partial m}(t-s,\pi)\Big]}}\mu_i(ds) \end{alignedat}\\
&  \,\nonumber\\
&
\begin{alignedat}{2} d\lambda(t)&= \Big\{\frac{\partial H}{\partial y}(t,\pi)+\mathbb E\Big[\frac{\partial H}{\partial n}(t,\pi)\Big]\Big\} dt\\ &\quad+\dfrac{\partial H}{\partial z}\left( t,\pi\right) dB(t)+\int\limits_{\mathbb{R}_{0}}\nabla_k H(t,\pi)(e)\tilde{N}(dt,de),t\in [0,\infty)\\ \lambda(0)&=h'(Y(0)). \end{alignedat}
\end{align}
}

\begin{example}
{\normalsize Suppose $N=1$, $\mu:=\mu_{1}$ }

\begin{enumerate}
\item {\normalsize If $\mu$ is the Dirac measure concentrated at $0$, then
\[
\Upsilon(t):=\frac{\partial H}{\partial x}(t,\pi)+ \mathbb{E}\Big[\frac
{\partial H}{\partial m}(t,\pi)\Big].
\]
}

\item {\normalsize If $\mu$ is the Dirac measure concentrated at $-\delta$,
then
\[
\Upsilon(t):=\frac{\partial H}{\partial x}(t+\delta,\pi)+ \mathbb{E}%
\Big[\frac{\partial H}{\partial m}(t+\delta,\pi)\Big].
\]
}

\item {\normalsize If $\mu(ds)=e^{\lambda s}ds$, then
\begin{align}
\Upsilon(t):  &  =\int_{-\delta}^{0}\frac{\partial H}{\partial x}(t-s,\pi)+
\mathbb{E}\Big[\frac{\partial H}{\partial m}(t-s,\pi)\Big]e^{\lambda s}(ds)\\
\color{red}  &  =\int_{t-\delta}^{t} \frac{\partial H}{\partial x}(-s,\pi)+
\mathbb{E}\Big[\frac{\partial H}{\partial m}(-s,\pi)\Big]e^{\lambda
(s-t)}(ds)\color{black} .
\end{align}
}
\end{enumerate}
\end{example}

{\normalsize
%\begin{equation}
%dp(t)=\mathbb{E[\eta(}t)\mid\mathfrak F_{t}\mathbb{]}dt+q(t)dB(t)+{\int
%\limits_{\mathbb{R}_{0}}}r(t,e)\tilde{N}(de,dt),t\in\lbrack0,\infty)
%\label{EQ31}%
%\end{equation}
%where%
%\begin{align}
%\eta(t)  &  =-\frac{\partial H}{\partial x}\left(  t\right)  -\frac{\partial
%H}{\partial x_{1}}\left(  t+\delta\right)  -\mathbb{E}\left[  \frac{\partial
%H}{\partial m}\left(  t\right)  \right] \label{EQ32}\\
%&  -e^{\lambda t}\left(  \int\limits_{t}^{t+\delta}\frac{\partial H}{\partial
%x_{2}}\left(  s\right)  e^{-\lambda s}ds\right)  ,\nonumber
%\end{align}
}

{\normalsize
%for some constant $\kappa>0.$
}

{\normalsize
%Forward equation%
}

{\normalsize
%\begin{equation}
%\left\{
%\begin{array}
%[c]{l}%
%d\lambda(t)=\left(  \dfrac{\partial H}{\partial y}\left(  t\right)
%+\mathbb{E}\left[  \dfrac{\partial H}{\partial n}\left(  t\right)  \right]
%\right)  dt+\dfrac{\partial H}{\partial z}\left(  t\right)  dB(t)+{\int
%\limits_{\mathbb{R}_{0}}\nabla_{k}H}(t,e)\tilde{N}(de,dt),t\in\lbrack
%0,\infty),\\
%\lambda(0)=h_{1}^{\prime}(Y(0)).
%\end{array}
%\right.  \label{lamdainf}%
%\end{equation}
}

\section{{\protect\normalsize A necessary maximum principle}}

{\normalsize Suppose that a control $\pi\in\mathcal{A}_{\mathbb{G}}$ is
optimal and that $\eta\in\mathcal{A}_{\mathbb{G}}.$ If the function
$s\longmapsto J(\pi+s\eta)$ is well defined and differentiable on a
neighbourhood of $0$, then
\begin{equation}
\frac{d}{ds}J(\pi+s\eta)\mid_{s=0}=0.
\end{equation}
Under a set of suitable assumptions on the functions $f,b,\sigma,g,h,$%
}${\normalsize \gamma}$ {\normalsize and $K$, we will show that for every
admissible $\pi$, and bounded admissible $\eta,$%
\begin{equation}
\frac{d}{ds}J(\pi+s\eta)\mid_{s=0}=\mathbb{E}\left[  \int_{0}^{\infty}%
\frac{\partial}{\partial\pi}H\left(  t,\pi\right)  \eta(t)dt\right]  .
\end{equation}
Then, provided that the set of admissible controls $\mathcal{A}_{\mathbb{G}}$
is sufficiently large,%
\begin{equation}
\frac{d}{ds}J(\pi+s\eta)\mid_{s=0}=0
\end{equation}
is equivalent to%
\begin{equation}
\mathbb{E}\left[  \frac{\partial}{\partial\pi}H\left(  t,\pi\right)
\mathcal{\mid}\mathfrak{G}_{t}\right]  =0\text{ \ }P-\text{a.s. for each }%
t\in\lbrack0,\infty).
\end{equation}
Consequently,%
\begin{equation}
\mathbb{E}\left[  \frac{\partial}{\partial\pi}H\left(  t,\pi\right)
\mathcal{\mid}\mathfrak{G}_{t}\right]  =0\text{ \ }P-\text{a.s. for each }%
t\in\lbrack0,\infty),
\end{equation}
is a necessary condition for optimality of $\pi.$ }

{\normalsize The first step of deriving a necessary maximum principle is to
establish the following equalities.}

{\small
\begin{align*}
&  \frac{d}{ds}J(\pi+s\eta)\mid_{s=0}\\
&  =\mathbb{E}\left[  \int_{0}^{\infty}\nabla f(t,\mathbf{X}^{\pi}%
(t),\pi)\cdot(\varkappa^{\pi}(t),\eta(t))^{T}dt+h^{\prime}(Y^{\pi}%
(0))\cdot\mathcal{Y}\text{ }^{\pi,\eta}(0)\right] \\
&  =\mathbb{E}\Big[\int_{0}^{\infty}\frac{\partial H}{\partial{\normalsize \pi
}}(t,\pi)\eta(t)dt\Big].
\end{align*}
}{\normalsize We will formalize this through Lemma }${\normalsize 4.3}$
{\normalsize and Lemma }${\normalsize 4.6}${\normalsize , but first we need to
impose a set of assumptions: }

{\normalsize \textbf{Assumptions (V)} }

\begin{description}
\item[i)] {\normalsize \textit{Assumptions on the coefficient functions} }

\begin{itemize}
\item {\normalsize The functions $\nabla b,\nabla\sigma$ and $\nabla g$ are
bounded. The upper bound is denoted by $D_{0}$. Also, there exists a
non-negative function $D\in L^{2}$ such that%
\begin{equation}
\left\vert \nabla\gamma(t,\mathbf{x},u,e)\right\vert +\left\vert
K(t,e)\right\vert \leq D(e)
\end{equation}
}

\item {\normalsize The functions $\nabla b,\nabla\sigma$ and $\nabla g$ are
Lipschitz continuous in the variables $\mathbf{x},\mathbf{m},u,$ uniformly in
$t,w,$ with the Lipschitz constant $L_{0}>0$. Also, there exits a function
$L\in L^{2}(\nu)$ independent of $t,w$, such that%
\begin{equation}%
\begin{array}
[c]{c}%
\left\vert \gamma(t,\mathbf{x},u,e,\omega)-\gamma(t,\mathbf{x}^{\prime
},u^{\prime},e)\right\vert \\
\leq L(e)\Big(\left\vert \mathbf{x}-\mathbf{x}^{\prime}\right\vert +\left\vert
u-u^{\prime}\right\vert \Big) .
\end{array}
\end{equation}
}

\item {\normalsize The function $L^{\prime}$ from Assumption (I)}
{\normalsize is also in $L^{2}(\nu)$. }
\end{itemize}

\item[ii)] {\normalsize \textit{Assumptions on the performance functional} }

\begin{itemize}
\item {\normalsize The functions $\nabla f,\nabla h$ and $\nabla g$ are
bounded.The upper bound is still denoted by $D_{0}$. }

\item {\normalsize The functions $\nabla f,\nabla h$ and $\nabla g$ are
Lipschitz continuous in the variables $(\mathbf{x},\mathbf{y},z,k,u),$
uniformly in $t,w.$ The Lipschitz constant is still denoted by $L_{0}$. }
\end{itemize}

\item[iii)] {\normalsize Assumptions on the set of admissible processes }

\begin{itemize}
\item {\normalsize Whenever }${\normalsize u}${\normalsize $\in\mathcal{A}%
_{\mathbb{G}}$ and $\eta\in\mathcal{A}_{\mathbb{G}}$ is bounded, there exists
$\epsilon\mathcal{>}0$ such that%
\begin{equation}
u+s\eta\in\mathcal{A}_{\mathbb{G}}\text{\ \ \ \ for each }s\in(-\epsilon
,\epsilon). \label{EQ14}%
\end{equation}
}

\item {\normalsize For each $t_{0}>0$ and each bounded $\mathfrak{G}_{t_{0}}%
$-measurable random variables $\alpha$, the process $\eta(t)=\alpha1_{\left[
t_{0},t_{0}+h\right)  }(t)$ belongs to $\mathcal{A}_{\mathbb{G}}.\smallskip$ }
\end{itemize}
\end{description}

{\normalsize \color{black} }

\subsection{{\protect\normalsize The derivative processes}}

{\normalsize Suppose that $\pi,\eta\in\mathcal{A}_{\mathbb{G}}$, with $\eta$
bounded. Consider the equations }

{\normalsize
\begin{equation}%
\begin{split}
d\mathcal{X}(t)  &  =\nabla b(t,\pi)\cdot\left(  \mathbb{X}(t),\mathbb{E}%
[\mathbb{X}(t)],\eta(t)\right)  ^{\mathsf{T}}dt\\
&  +\nabla\sigma(t,\pi)\cdot\left(  \mathbb{X}(t),\mathbb{E}[\mathbb{X}%
(t)],\eta(t)\right)  ^{\mathsf{T}}dB(t)\\
&  +\int_{\mathbb{R}_{0}}\nabla\gamma(t,\pi,e)\cdot(\mathbb{X}(t),\mathbb{E}%
[\mathbb{X}(t)],\eta(t))^{\mathsf{T}}\tilde{N}(dt,de),t\in\lbrack0,\infty),\\
\mathcal{X}(t)  &  =0,\text{ \ \ \ for }t\in\left[  -\delta,0\right]  ,
\end{split}
\label{eq:DerivativeX}%
\end{equation}
where
\[
\mathbb{X}(t):=\big(\int_{-\delta}^{0}\mathcal{X}(t+s)\mu_{1}(ds),\ldots
,\int_{-\delta}^{0}\mathcal{X}(t+s)\mu_{N}(ds)\big)
\]
and%
\begin{equation}%
\begin{split}
d\mathcal{Y}(t)  &  =\Big(-\nabla g(t,\pi)\Big)\cdot\Big(\mathbb{X}%
(t),\mathbb{E}[\mathbb{X}(t)],\mathcal{Y}(t),\mathbb{E}[\mathcal{Y}%
(t)],\mathcal{Z}(t),\mathcal{K}(t),\eta(t)\Big)^{\mathsf{T}}dt\\
&  +\mathcal{Z}(t)dB(t)+{\displaystyle\int_{\mathbb{R}_{0}}}\mathcal{K}%
(t,e)\tilde{N}(dt,de),\quad t\in\lbrack0,\infty).
\end{split}
\label{eq:DerivativeYZK}%
\end{equation}
%Throughout this section, it will be convenient to use the notation%
%\begin{align*}
%\bm{\mathcal{X}}(t)  &  =\left(  \mathcal{X}(t),\mathcal{X}(t-\delta),\mathbb{E}[\mathcal{X}(t)],\eta(t)\right)  ,\\
%\Phi(t)  &  =\left(  \mathcal{X}(t),\mathcal{X}(t-\delta),\mathbb{E}[\mathcal{X}(t)],\mathcal{Y}(t),\mathbb{E}[\mathcal{Y}(t)],\mathcal{Z}(t),\eta(t)\right)  ,\\
%\color{red}\mathcal{X}_{2}(t)  &  \color{red}=\left(  \mathcal{X}(t),\mathcal{X}(t-\delta),\mathbb{E}[\mathcal{X}(t)],\mathcal{Y}(t),\mathbb{E[}\mathcal{Y}(t)],\mathcal{Z}(t),\mathcal{K}(t,\cdot),\eta(t)\right).
%\end{align*}
We say that a solutions $\mathcal{X}=\mathcal{X}^{\pi,\eta},\mathcal{Y}%
=\mathcal{Y}^{\pi,\eta},\mathcal{Z}=\mathcal{Z}^{\pi,\eta}$ and $\mathcal{K}%
=\mathcal{K}^{\pi,\eta}$ associated with the controls $\pi,\eta$ exists if
there are processes $\mathcal{X},\mathcal{Y},\mathcal{Z}$ and $\mathcal{K}$
satisfying (\ref{eq:DerivativeX})-(\ref{eq:DerivativeYZK}), and \color{black}
}

{\normalsize
\begin{equation}
\mathbb{E}\left[  \underset{t\geq0}{\sup\text{ }}e^{\kappa t}\left(
\mathcal{Y}(t)\right)  ^{2}+%
%TCIMACRO{\tint \limits_{0}^{\infty}}%
%BeginExpansion
{\textstyle\int\limits_{0}^{\infty}}
%EndExpansion
\left\vert \mathcal{X}(t)\right\vert ^{2}+e^{\kappa t}(\left(  \mathcal{Z}%
(t)\right)  ^{2}+%
%TCIMACRO{\tint \limits_{\mathbb{R}_{0}}}%
%BeginExpansion
{\textstyle\int\limits_{\mathbb{R}_{0}}}
%EndExpansion
\left(  \mathcal{K}\left(  t,e\right)  \right)  ^{2}\nu(de))dt\right]  <\infty
\end{equation}
for some constant $\kappa>0$. }

\subsubsection{{\protect\normalsize Differentiability of the forward state
process}}

{\normalsize To proofs in this section are similar to e.g. the proofs of
Lemmas $3.1$, $4.1$ and in \cite{DA} and \cite{Peng} resp. However because of
our jump term, we need to use Kunita's inequality instead of
Burkholder-Davis-Gundy's inequality. We also do not require any $L^{4}%
$-boundedness and convergence of any of our processes as is done e.g in
\cite{DA}, to assure the convergence in our Lemma \ref{lemma:Jdifferentable1}.
Requiring $L^{4}$ boundedness on the process would have lead to the necessity
of additional assumptions on the Lipshitz and boundedness constants, as an
example assuming that the function $D$ in assumption (V) is also in $L^{4}%
(\nu)$ is a sufficient condition to ensure that $\mathbb{E}[\sup_{0\leq v\leq
t}|X(v)|^{4}]<\infty$ for each $t\in\lbrack0,\infty)$. For convenience to the
reader, let us recall }

\begin{lemma}
[Kunita's inequality, corollary 2.12 in \cite{Kunita}]{\normalsize Suppose
$\rho\geq2$ and
\begin{align}
X(t)=x+\int_{0}^{t}b(r)dr+\int_{0}^{t}\sigma(r)dB(r)+\int_{0}^{t}
\int_{\mathbb{R}_{0}}\mathcal{X}(r,e)\tilde{N}(dr,de).
\end{align}
Then there exists a positive constant $C_{\rho, T},$ (depending only on $\rho,
T$) such that the following inequality holds%
\begin{align*}
\mathbb{E}\left[  \underset{0\leq s\leq T}{\sup}\left\vert X(s)\right\vert
^{\rho}\right]   &  \leq C_{\rho,T}\left[  \left\vert x\right\vert ^{\rho
}+\left\{
%TCIMACRO{\dint _{0}^{t}}%
%BeginExpansion
{\displaystyle\int_{0}^{t}}
%EndExpansion
\mathbb{E}\left[  \left\vert b(r)\right\vert ^{\rho}\right]  +\mathbb{E}%
\left[  \left\vert \sigma(r)\right\vert ^{\rho}\right]  \right.  \right. \\
&  \left.  \left.  +\mathbb{E}\left[  \int_{%
%TCIMACRO{\U{211d} }%
%BeginExpansion
\mathbb{R}
%EndExpansion
_{0}}\left\vert \mathcal{X}(r,e)\right\vert ^{\rho}\nu(de)\right]
+\mathbb{E}\left[  \left(  \int_{%
%TCIMACRO{\U{211d} }%
%BeginExpansion
\mathbb{R}
%EndExpansion
_{0}}\left\vert \mathcal{X}(r,e)\right\vert ^{2}\nu(de)\right)  ^{\frac{\rho
}{2}}\right]  \right\}  dr\right]  .
\end{align*}
}
\end{lemma}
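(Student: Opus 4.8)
The plan is to split $X(t)-x$ into its drift, Brownian and compensated-Poisson parts and to bound the $\rho$-th moment of the running supremum of each separately. First I would apply the elementary convexity bound $|a_{1}+a_{2}+a_{3}+a_{4}|^{\rho}\le 4^{\rho-1}(|a_{1}|^{\rho}+|a_{2}|^{\rho}+|a_{3}|^{\rho}+|a_{4}|^{\rho})$ to reduce $\mathbb{E}[\sup_{0\le s\le T}|X(s)|^{\rho}]$ to $4^{\rho-1}$ times $|x|^{\rho}$ together with the three supremum-moments of the stochastic integrals. All multiplicative constants produced below will be absorbed into a single $C_{\rho,T}$.

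The drift and Brownian terms are routine. For the drift, H\"older's inequality in the $dr$-variable gives $\sup_{s\le T}|\int_{0}^{s}b(r)\,dr|^{\rho}\le T^{\rho-1}\int_{0}^{T}|b(r)|^{\rho}\,dr$, which after taking expectations is the first contribution. For the Brownian integral I would invoke the classical Burkholder-Davis-Gundy inequality for continuous martingales to bound its supremum-moment by $C_{\rho}\,\mathbb{E}[(\int_{0}^{T}|\sigma(r)|^{2}\,dr)^{\rho/2}]$, and then use Jensen in the time variable (legitimate because $\rho/2\ge 1$) to pass to $T^{\rho/2-1}\int_{0}^{T}|\sigma(r)|^{\rho}\,dr$, which yields the second contribution.

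The genuine difficulty is the compensated Poisson integral $M(t):=\int_{0}^{t}\int_{\mathbb{R}_{0}}\mathcal{X}(r,e)\tilde{N}(dr,de)$, and in particular the need to produce \emph{both} jump terms $\int_{\mathbb{R}_{0}}|\mathcal{X}|^{\rho}\nu(de)$ and $(\int_{\mathbb{R}_{0}}|\mathcal{X}|^{2}\nu(de))^{\rho/2}$ on the right-hand side. A first attempt through the Burkholder-Davis-Gundy inequality for c\`adl\`ag martingales bounds $\mathbb{E}[\sup_{s\le T}|M(s)|^{\rho}]$ by $C_{\rho}\,\mathbb{E}[[M]_{T}^{\rho/2}]$, where $[M]_{T}=\int_{0}^{T}\int_{\mathbb{R}_{0}}|\mathcal{X}|^{2}N(dr,de)$ is the sum of squared jumps. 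Writing $N=\tilde{N}+\nu(de)\,dr$ and using $(a+b)^{\rho/2}\le 2^{\rho/2-1}(a^{\rho/2}+b^{\rho/2})$ isolates the desired $(\int_{\mathbb{R}_{0}}|\mathcal{X}|^{2}\nu)^{\rho/2}$ term, but leaves the $\rho/2$-moment of a further compensated integral of $|\mathcal{X}|^{2}$. When $\rho/2\in[1,2)$ this residual term cannot be reduced by a second application of Burkholder-Davis-Gundy with exponent at least $2$, and this is the step I expect to be the main obstacle.

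To get past it I would estimate $M$ directly through It\^o's formula applied to $t\mapsto|M(t)|^{\rho}$. After a localisation by stopping times (as in the proof of the sufficient principle above) guaranteeing that the relevant moments are finite, taking expectations annihilates the martingale part and leaves the compensator $\mathbb{E}[\int_{0}^{t}\int_{\mathbb{R}_{0}}\{|M_{r-}+\mathcal{X}|^{\rho}-|M_{r-}|^{\rho}-\rho|M_{r-}|^{\rho-2}M_{r-}\mathcal{X}\}\nu(de)\,dr]$. The elementary estimate $||a+b|^{\rho}-|a|^{\rho}-\rho|a|^{\rho-2}a\,b|\le C_{\rho}(|a|^{\rho-2}|b|^{2}+|b|^{\rho})$ controls the integrand: the $|b|^{\rho}$ piece produces the $\int_{\mathbb{R}_{0}}|\mathcal{X}|^{\rho}\nu$ term outright, while Young's inequality applied to $|M_{r-}|^{\rho-2}\int_{\mathbb{R}_{0}}|\mathcal{X}|^{2}\nu$ with conjugate exponents $\rho/(\rho-2)$ and $\rho/2$ splits it into an absorbable $\epsilon|M_{r-}|^{\rho}$ piece and a $C_{\epsilon}(\int_{\mathbb{R}_{0}}|\mathcal{X}|^{2}\nu)^{\rho/2}$ piece. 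A Gronwall argument in $t\mapsto\mathbb{E}[|M(t)|^{\rho}]$, whose exponential factor is the origin of the $T$-dependence of $C_{\rho,T}$, then bounds $\mathbb{E}[|M(t)|^{\rho}]$ by the two jump integrals, and Doob's $L^{\rho}$ maximal inequality (valid since $\rho>1$) upgrades this to the supremum. Collecting the four contributions and all constants yields the stated inequality.
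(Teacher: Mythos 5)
There is nothing to compare your proof against inside the paper: the authors state this lemma purely as a quotation of Corollary 2.12 in \cite{Kunita} (``for convenience to the reader'') and use it as a black box in the proof of their convergence lemma for the derivative process, giving no proof of their own. Judged on its own merits, your argument is correct, and it is in essence the classical proof of this inequality (often attributed to Bichteler--Jacod, and close to Kunita's own argument). The four-way splitting, the H\"older bound for the drift and the BDG-plus-Jensen bound for the Brownian term are routine. Your diagnosis of the jump term is also accurate: one application of Burkholder--Davis--Gundy leaves $\mathbb{E}\bigl[[M]_T^{\rho/2}\bigr]$ with $[M]_T=\int_0^T\int_{\mathbb{R}_0}|\mathcal{X}(r,e)|^2\,N(dr,de)$, and for $\rho\in[2,4)$ the residual compensated integral of $|\mathcal{X}|^2$ cannot be attacked by a second BDG with exponent $\geq 2$. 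Your escape route --- It\^o's formula for $|M|^{\rho}$, the Taylor-remainder bound $\bigl||a+b|^{\rho}-|a|^{\rho}-\rho|a|^{\rho-2}ab\bigr|\leq C_{\rho}\bigl(|a|^{\rho-2}|b|^{2}+|b|^{\rho}\bigr)$, Young's inequality with exponents $\rho/(\rho-2)$ and $\rho/2$, Gronwall, and Doob's $L^{\rho}$ maximal inequality --- is sound, with two small caveats: at $\rho=2$ the Young step degenerates (conjugate exponent $\infty$) but is then unnecessary, since $|a|^{\rho-2}|b|^2=|b|^2$; and the localisation must do double duty, making the stochastic integrals in It\^o's formula true martingales and making $t\mapsto\mathbb{E}[|M(t\wedge\tau_n)|^{\rho}]$ finite so that Gronwall applies, after which you let $n\to\infty$ by Fatou. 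It is also worth knowing that the BDG route you abandoned can be closed after all: BDG is valid for exponents $q=\rho/2\in[1,2)$ as well, and then the subadditivity $(\sum_i a_i)^{\theta}\leq\sum_i a_i^{\theta}$ for $\theta=q/2\leq1$, applied to the sum of fourth powers of the jumps, lands exactly on $\mathbb{E}\bigl[\int_0^T\int_{\mathbb{R}_0}|\mathcal{X}|^{\rho}\,\nu(de)\,dr\bigr]$ (for $\rho\geq4$ one iterates until the exponent drops below $2$). Your It\^o--Gronwall argument buys a uniform treatment of all $\rho\geq2$ in a single pass; the iterated-BDG argument is more mechanical but produces intermediate compensator terms that must then be interpolated between the two jump integrals appearing in the statement.
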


{\normalsize Now, define the random fields
\begin{align}
F^{\eta}_{\alpha}(t):=X^{\pi+\alpha\eta}(t)-X^{\pi}(t)
\end{align}
and
\begin{align}
\mathbf{F}^{\eta}_{\alpha}(t):=\mathbf{X}^{\pi+\alpha\eta}(t)-\mathbf{X}^{\pi
}(t)=\Big(\int_{-\delta}^{0} F_{\alpha}^{\eta}(t+r)\mu_{1}(dr),\ldots,
\int_{-\delta}^{0} F_{\alpha}^{\eta}(t+r)\mu_{N}(dr) \Big)
\end{align}
}

\begin{lemma}
{\normalsize Let $T\in(0,\infty)$. There exists a constant $C=C_{T}>0$,
independent of $\pi, \eta$, such that
\begin{align}
\label{convergenceResultX}\mathbb{E}[\sup_{0\leq v\leq t}|\mathbf{F}_{\alpha
}^{\pi,\eta}(v)|^{2}]\leq C\parallel\eta\parallel^{2}_{L^{2}(\Omega
\times[0,T])}\alpha^{2}.
\end{align}
whenever $t\leq T$. }

{\normalsize Moreover, there exists a measurable version of the map
\begin{align}
(t,\alpha,\omega)\mapsto\mathbf{F}_{\alpha}(t,\omega)
\end{align}
such that for a.e. $\omega$, $\mathbf{F}_{\alpha}(t)\rightarrow0$ as
$\alpha\rightarrow0$ for every $t\in[0,\infty)$. }
\end{lemma}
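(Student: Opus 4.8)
The plan is to write an integral equation for the scalar difference $F^\eta_\alpha(t)=X^{\pi+\alpha\eta}(t)-X^\pi(t)$, bound its running supremum by Kunita's inequality, dominate the delayed vector $\mathbf{F}^\eta_\alpha$ by the scalar difference, and close the loop with Gronwall's lemma; the measurable version and the convergence will then follow from the same estimate run with two perturbation levels. First I would note that $X^{\pi+\alpha\eta}$ and $X^{\pi}$ both solve \eqref{EQ27} with the \emph{same} deterministic initial segment $x_0$ on $[-\delta,0]$, so $F^\eta_\alpha(t)=0$ for $t\in[-\delta,0]$, and in particular $F^\eta_\alpha(0)=0$. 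For $t\ge0$, $F^\eta_\alpha$ solves an SDE whose drift, diffusion and jump integrands are the differences of $b,\sigma,\gamma$ evaluated along the two controls. Applying Kunita's inequality with $\rho=2$ on $[0,t]$, the vanishing of $F^\eta_\alpha(0)$ removes the boundary term, so the right-hand side is just the time integral of the $L^2$-norms of these coefficient differences. Using Assumption (V) (boundedness and Lipschitz continuity of $\nabla b,\nabla\sigma,\nabla\gamma$ in $\mathbf{x},\mathbf{m},u$, with $L\in L^2(\nu)$ for the jump part), each difference is bounded by a constant times $|\mathbf{F}^\eta_\alpha(r)|+|\mathbb{E}[\mathbf{F}^\eta_\alpha(r)]|+\alpha|\eta(r)|$, whence
\[
\mathbb{E}\Big[\sup_{0\le v\le t}|F^\eta_\alpha(v)|^2\Big]\le C\int_0^t \mathbb{E}\big[|\mathbf{F}^\eta_\alpha(r)|^2\big]\,dr+C\alpha^2\|\eta\|^2_{L^2(\Omega\times[0,T])},
\]
where Jensen's inequality absorbs the $|\mathbb{E}[\mathbf{F}^\eta_\alpha(r)]|^2$ term into $\mathbb{E}[|\mathbf{F}^\eta_\alpha(r)|^2]$.

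The key structural step is to re-express the delayed vector through the scalar difference. Since $\mathbf{F}^\eta_\alpha(t)=\big(\int_{-\delta}^0 F^\eta_\alpha(t+s)\mu_1(ds),\dots,\int_{-\delta}^0 F^\eta_\alpha(t+s)\mu_N(ds)\big)$, Cauchy--Schwarz against each finite measure $|\mu_i|$ gives $|\mathbf{F}^\eta_\alpha(v)|^2\le C\sup_{-\delta\le w\le v}|F^\eta_\alpha(w)|^2$, and because $F^\eta_\alpha$ vanishes on $[-\delta,0]$ this supremum equals $\sup_{0\le w\le v}|F^\eta_\alpha(w)|^2$. Setting $\phi(r):=\mathbb{E}[\sup_{0\le v\le r}|F^\eta_\alpha(v)|^2]$ we therefore have $\mathbb{E}[|\mathbf{F}^\eta_\alpha(r)|^2]\le C\,\phi(r)$. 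Substituting into the previous display yields $\phi(t)\le C\int_0^t\phi(r)\,dr+C\alpha^2\|\eta\|^2_{L^2(\Omega\times[0,T])}$, and Gronwall's inequality gives $\phi(t)\le C_T\alpha^2\|\eta\|^2_{L^2(\Omega\times[0,T])}$ for $t\le T$. Applying the delay bound once more, $\mathbb{E}[\sup_{0\le v\le t}|\mathbf{F}^\eta_\alpha(v)|^2]\le C\phi(t)$, which is exactly \eqref{convergenceResultX}.

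For the second assertion I would run the identical argument with $\pi$ replaced by $\pi+\alpha'\eta$ and perturbation $(\alpha-\alpha')\eta$, obtaining $\mathbb{E}[\sup_{0\le v\le T}|\mathbf{X}^{\pi+\alpha\eta}(v)-\mathbf{X}^{\pi+\alpha'\eta}(v)|^2]\le C_T|\alpha-\alpha'|^2\|\eta\|^2$, so that $\alpha\mapsto\mathbf{X}^{\pi+\alpha\eta}$ is Lipschitz as a map into $L^2(\Omega;C([0,T];\mathbb{R}^N))$. The Kolmogorov continuity theorem, with $\alpha$ playing the role of time and moment exponent $2=1+1$, then furnishes a version jointly measurable in $(t,\alpha,\omega)$ and a.s.\ continuous in $\alpha$; for this version $\mathbf{F}_\alpha(t)\to0$ as $\alpha\to0$ for every $t$ and a.e.\ $\omega$. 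Equivalently, \eqref{convergenceResultX} gives $\big\|\sup_{[0,T]}|\mathbf{F}_\alpha|\big\|_{L^2(\Omega)}\le C\alpha$, so a Borel--Cantelli argument along $\alpha_n=1/n$ (estimating $P(\sup_{[0,T]}|\mathbf{F}_{1/n}|>n^{-1/3})\le C_T n^{-4/3}$) gives a.s.\ uniform convergence on $[0,T]$ along that sequence, with the Lipschitz-in-$\alpha$ estimate filling the gaps to produce the full limit.

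The main obstacle is the delay coupling: closing the Gronwall loop requires dominating the supremum of the delayed vector $\mathbf{F}^\eta_\alpha$ over $[0,t]$ by the supremum of the scalar difference over the \emph{same} interval, which is only legitimate because $F^\eta_\alpha$ vanishes on $[-\delta,0]$ and the $\mu_i$ are finite. The second delicate point is upgrading $L^2$-convergence to an a.s.\ limit holding for every $t$ simultaneously; this is precisely why the Lipschitz-in-$\alpha$ estimate (feeding Kolmogorov's theorem, or patching a Borel--Cantelli subsequence) is needed rather than a bare subsequence extraction.
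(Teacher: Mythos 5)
Your proof of the main estimate follows essentially the same route as the paper: integral representation of the difference, Kunita's inequality with $\rho=2$, Lipschitz bounds on $b,\sigma,\gamma$ coming from the boundedness of $\nabla b,\nabla\sigma,\nabla\gamma$ in Assumption (V), Jensen/Cauchy--Schwarz against the finite measures $\mu_i$ to dominate the delayed vector $\mathbf{F}^\eta_\alpha$ by the running supremum of the scalar difference, and Gronwall to close the loop; your explicit remark that $F^\eta_\alpha$ vanishes on $[-\delta,0]$ is implicit in the paper's choice of $\beta_\alpha(t)=\mathbb{E}[\sup_{-\delta\le v\le t}|F^\eta_\alpha(v)|^2]$. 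Where you genuinely diverge is the second assertion: the paper simply defers it to the argument of Lemma 4.2 in the cited work of Dahl--Mohammed--{\O}ksendal--R{\o}se, whereas you give a self-contained construction, re-running the estimate with base control $\pi+\alpha'\eta$ (admissible for small $\alpha'$ by Assumption (V)(iii), with a constant independent of the base control) to get Lipschitz continuity of $\alpha\mapsto \mathbf{X}^{\pi+\alpha\eta}$ into $L^2(\Omega;C([0,T];\mathbb{R}^N))$, and then invoking Kolmogorov's continuity criterion (exponents $p=2$, $\beta=1$) to produce a jointly measurable version continuous in $\alpha$; this is sound, modulo the routine patching over $T=1,2,\dots$ needed to cover all of $[0,\infty)$. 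One caution: your ``equivalent'' Borel--Cantelli variant is not quite self-sufficient as stated, because the Lipschitz-in-$\alpha$ bound is an $L^2$ estimate and cannot by itself control the uncountable supremum over $\alpha$ in the gaps between $1/(n+1)$ and $1/n$; you need the continuous modification (i.e.\ Kolmogorov, or a chaining argument) first, so the Kolmogorov route should be regarded as the proof and the Borel--Cantelli remark as redundant rather than an alternative.
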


{\normalsize \begin{proof}The proof follows that of Lemma 4.2 in \cite{DMOE} closely. Define
\begin{align}
\beta_\alpha(t):= \mathbb E[\sup_{-\delta\leq v\leq t}| F_\alpha^{\eta}(v)|^2].
\end{align}Observe that using Jensen's inequality, we find that
\begin{align*}
\mathbb E[\sup_{0\leq v\leq t}|\mathbf F_\alpha^{\eta}(v)|^2]&=\mathbb E\Big[\sup_{0\leq v\leq t}\sum_{i=1}^N\Big|\int_{-\delta}^0 F_\alpha^{\eta}(v+r)\mu_i(dr)\Big|^2\Big]\\
&\leq  \mathbb E\Big[\sup_{0\leq v\leq t}\sum_{i=1}^N|\mu_i|\int_{-\delta}^0 |F_\alpha^{\eta}(v+r)|^2\mu_i(dr)\Big]\\
&\leq \mathbb E\Big[\sup_{0\leq v\leq t}\sum_{i=1}^N|\mu|^2\sup_{-\delta\leq r\leq 0}|F_\alpha^{\eta}(v+r)|^2\Big]\\
&\leq |\mu|^2 \beta_\alpha(t)
\end{align*}where $|\mu|:=\sum_{i=1}^N|\mu_i|:=\sum_{i=1}^N\mu_i[-\delta,0]$.
Since $\nabla b,\nabla \sigma,\nabla \gamma$ are bounded, $b,\sigma$ and $\gamma$ are Lipshitz in the variable $\mathbf x,\mathbf m, u$. Now using the integral representation of $X^{\pi+\alpha\eta}$ and $X^\pi$, Kunita's inequality, and finally the Lipshitz conditions on $b,\sigma$ and $\gamma$, we find that
\begin{align*}
\beta_\alpha(t)
%&=\mathbb E\Big[\sup_{-\delta\leq v\leq t}\Big|\int_0^v b(s,\pi+\alpha\eta)-b(s,\pi)ds+\int_0^v \sigma(s,\pi+\alpha\eta)-\sigma(s,\pi)dB(s)\\
%&\quad+\int_0^v\int_{\mathbb R_0}\gamma(s,\pi+\alpha\eta,e)-\gamma(s,\pi,e)\tilde{N}(ds,de)\Big|^2\Big]\\
&\leq C_{2,T}\mathbb E\Big[ \int_0^t |b(s,\pi+\alpha\eta)-b(s,\pi)|^2 + |\sigma(s,\pi+\alpha\eta)-\sigma(s,\pi)|^2\\
&\quad + \int_{\mathbb R_0}|\gamma(s,\pi+\alpha\eta,e)-\gamma(s,\pi,e)|^2 \nu(de)ds   \Big]\\
&\leq C_{2,T}(D_0^2+ \parallel D\parallel^2_{L^2(\nu)})\mathbb E\Big[ \int_0^t |\mathbf F_\alpha(s)|^2+ \alpha^2|\eta(s)|^2 ds\Big]\\
&\leq C_{2,T}(D_0^2+ \parallel D\parallel^2_{L^2(\nu)})\Big(\int_0^t\beta_\alpha(s)ds+ \alpha^2\parallel\eta\parallel^2_{L^2(\Omega\times[0,T])}\Big).
\end{align*}
Now (\ref{convergenceResultX}) holds by Gronwall's inequality.
The second part of the lemma follows by the same argument as in \cite{DMOE}.
\end{proof}
Now, fix $\pi$ and define
\begin{align*}
A^{\eta}_{\alpha}(t)  &  :=\frac{X^{\pi+\alpha\eta}(t)-X^{\pi}(t)}{\alpha
}-\mathcal{X}^{\pi,\eta}(t)\\
\mathbf{A}^{\eta}_{\alpha}(t)  &  :=\frac{\mathbf{X}^{\pi}(t)-\mathbf{X}%
^{\pi+\alpha\eta}(t)}{\alpha}-\bm{\mathcal X}^{\pi,\eta}(t)\\
&  =\Big(\int_{-\delta}^{0} A^{\eta}_{\alpha}(t+r)\mu_{1}(dr), \ldots,
\int_{-\delta}^{0} A^{\eta}_{\alpha}(t+r)\mu_{N}(dr) \Big)
\end{align*}
for each $\eta$. }

\begin{lemma}
{\normalsize \label{lemma:Jdifferentable1} For each $t\in(0,\infty)$, it holds
that
\begin{align*}
\label{x}\theta_{\alpha}(t):=  &  \mathbb{E}[\sup_{0\leq v\leq t}|A_{\alpha
}^{\eta}(v)|^{2}] \rightarrow0,\\
&  \mathbb{E}[\sup_{0\leq v\leq t}|\mathbf{A}_{\alpha}^{\eta}(v)|^{2}]
\rightarrow0
\end{align*}
as $\alpha\rightarrow0$. }
\end{lemma}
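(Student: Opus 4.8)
The plan is to establish the convergence $\theta_\alpha(t)\to 0$ by deriving a Gronwall-type estimate for $\theta_\alpha$, exactly parallelling the previous lemma but now with an additional error term that I must show vanishes. First I would write out the integral representation of $A_\alpha^\eta(t)$ by subtracting the equation for $\mathcal X^{\pi,\eta}$ (namely \eqref{eq:DerivativeX}) from the difference quotient $(X^{\pi+\alpha\eta}-X^\pi)/\alpha$. In each of the drift, diffusion and jump coefficients I would insert and subtract a term so that the integrand splits into two pieces: a \emph{linear} piece of the form $\nabla b(t,\pi)\cdot(\mathbf A_\alpha^\eta(t),\mathbb E[\mathbf A_\alpha^\eta(t)],0)^{\mathsf T}$ (and similarly for $\sigma,\gamma$), plus a \emph{remainder} piece measuring the failure of the difference quotient $(b(s,\pi+\alpha\eta)-b(s,\pi))/\alpha$ to equal the exact directional derivative $\nabla b(s,\pi)\cdot(\mathbf F_\alpha^\eta(s)/\alpha,\ldots,\eta(s))^{\mathsf T}$. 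Denote this remainder by $R_\alpha(s)$.

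Next I would apply Kunita's inequality (the Lemma just quoted, with $\rho=2$) to this integral representation, exactly as in the proof of the previous lemma. The boundedness of $\nabla b,\nabla\sigma$ by $D_0$ and of $\nabla\gamma$ by $D\in L^2(\nu)$, together with Jensen's inequality to pass from $\mathbf A_\alpha^\eta$ to $A_\alpha^\eta$ (the same $|\mu|^2$ bound as before), turns the linear piece into a term bounded by $C_{2,T}(D_0^2+\|D\|_{L^2(\nu)}^2)\int_0^t \theta_\alpha(s)\,ds$. This is precisely the term to which Gronwall's inequality will be applied. The remainder piece contributes a term of the form $C\,\mathbb E\big[\int_0^t |R_\alpha(s)|^2\,ds\big]=:\varepsilon_\alpha(t)$, so that altogether
\begin{align}
\theta_\alpha(t)\leq C\int_0^t \theta_\alpha(s)\,ds+\varepsilon_\alpha(t).
\end{align}
By Gronwall's inequality it then suffices to prove $\varepsilon_\alpha(t)\to 0$ as $\alpha\to 0$.

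The main obstacle is showing that this remainder $\varepsilon_\alpha(t)\to 0$. The remainder is a Taylor-type error: using the $C^1$ (Fréchet) hypothesis on $b,\sigma,\gamma$ from Assumption (III), each coefficient difference $b(s,\pi+\alpha\eta)-b(s,\pi)$ can be written via the mean value form $\int_0^1 \nabla b(s,\pi+\lambda(\pi+\alpha\eta-\pi))\cdot(\mathbf F_\alpha^\eta(s),\mathbb E[\mathbf F_\alpha^\eta(s)],\alpha\eta(s))^{\mathsf T}\,d\lambda$, so that $R_\alpha(s)$ is an average of $\big(\nabla b(s,\text{intermediate point})-\nabla b(s,\pi)\big)$ applied to $(\mathbf F_\alpha^\eta(s)/\alpha,\ldots,\eta(s))$. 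I would control this by combining two facts: the Lipschitz continuity of $\nabla b,\nabla\sigma,\nabla\gamma$ (Assumption (V)) makes the difference of gradients small, of order the distance $\lambda(|\mathbf F_\alpha^\eta(s)|+\alpha|\eta(s)|)$, while the previous lemma's estimate \eqref{convergenceResultX} controls $\mathbb E[\sup|\mathbf F_\alpha^\eta|^2]\leq C\|\eta\|^2\alpha^2$, so that $\mathbf F_\alpha^\eta(s)/\alpha$ stays $L^2$-bounded uniformly in $\alpha$ and $\mathbf F_\alpha^\eta(s)\to 0$ pointwise a.e. Feeding these into a dominated-convergence argument gives $\varepsilon_\alpha(t)\to 0$; the jump term requires care, using the $L^2(\nu)$-domination $|\nabla\gamma|\leq D(e)$ to dominate the integrand and invoke dominated convergence against $\nu(de)$. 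The bracketed second statement of the lemma, $\mathbb E[\sup_{0\leq v\leq t}|\mathbf A_\alpha^\eta(v)|^2]\to 0$, then follows immediately from the first by the same $|\mu|^2$ Jensen bound used at the start of the previous proof. Throughout, I would note that the argument is morally identical to Lemma~4.2 in \cite{DMOE} and the corresponding steps in \cite{DA}, \cite{Peng}, the only genuinely new ingredient being the use of Kunita's inequality to accommodate the jump term and the avoidance of any $L^4$-boundedness assumption.
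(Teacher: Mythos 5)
Your outline has the same architecture as the argument the paper relies on: the paper's own proof consists only of the Jensen-type bound $\mathbb E[\sup_{0\le v\le t}|\mathbf A_\alpha^\eta(v)|^2]\le N|\mu|^2\theta_\alpha(t)$ (your last step) plus a citation of Lemma 4.3 in \cite{DMOE}, and the omitted DMOE-style argument is precisely your scheme: integral representation of $A_\alpha^\eta$, Kunita's inequality with $\rho=2$, a Gronwall term plus a Taylor remainder $\varepsilon_\alpha(t)$. The gap is in your treatment of the remainder. You define $R_\alpha$ so that the gradient increment multiplies the $\alpha$-dependent vector,
\[
R_\alpha(s)=\int_0^1\bigl(\nabla b(s,\mathrm{int.})-\nabla b(s,\pi)\bigr)\cdot\bigl(\mathbf F_\alpha^\eta(s)/\alpha,\ \mathbb E[\mathbf F_\alpha^\eta(s)]/\alpha,\ \eta(s)\bigr)^{\mathsf T}d\lambda,
\]
and neither of your two ingredients makes $\mathbb E\bigl[\int_0^t|R_\alpha(s)|^2ds\bigr]\to0$. (i) If you use the Lipschitz bound $|\nabla b(\mathrm{int.})-\nabla b(\pi)|\le L_0\lambda(|\mathbf F_\alpha^\eta|+\alpha|\eta|)$ multiplicatively, then $|R_\alpha|^2$ contains $|\mathbf F_\alpha^\eta|^4/\alpha^2$, so you need $\mathbb E[\sup|\mathbf F_\alpha^\eta|^4]=O(\alpha^4)$; the previous lemma gives only the $L^2$ estimate, and the paper states expressly that it avoids such $L^4$ bounds (they would force additional hypotheses such as $D\in L^4(\nu)$ --- this is the route of \cite{DA} that the authors say they do not take). (ii) If you instead use $|\nabla b(\mathrm{int.})-\nabla b(\pi)|\le 2D_0$ together with a.e.\ convergence to $0$, dominated convergence does not apply as you state it: the natural majorant $4D_0^2|(\mathbf F_\alpha^\eta/\alpha,\ldots,\eta)|^2$ itself depends on $\alpha$, and uniform $L^2$-boundedness of $\mathbf F_\alpha^\eta/\alpha$ gives only uniform $L^1$-boundedness of these majorants, not a fixed integrable dominating function (upgrading to uniform integrability would again be an $L^4$-type hypothesis).

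The repair is a different splitting, and it is exactly what makes the $L^4$-free argument work: substitute $\mathbf F_\alpha^\eta/\alpha=\mathbf A_\alpha^\eta+\mathbb X$ \emph{inside} the Taylor formula before estimating. Then the intermediate-point gradient $\int_0^1\nabla b(s,\mathrm{int.})\,d\lambda$, bounded by $D_0$, multiplies $(\mathbf A_\alpha^\eta,\mathbb E[\mathbf A_\alpha^\eta],0)$ and is absorbed into the Gronwall term, while the gradient increment multiplies the \emph{fixed} vector $(\mathbb X,\mathbb E[\mathbb X],\eta)$. For that piece, $4D_0^2|(\mathbb X(s),\mathbb E[\mathbb X(s)],\eta(s))|^2$ is an $\alpha$-independent integrable majorant, and the increment tends to $0$ a.e.\ by continuity of the gradients combined with $\mathbf F_\alpha^\eta\to0$ a.e.\ --- this is where the measurable-version statement of the previous lemma is actually needed --- so dominated convergence yields $\varepsilon_\alpha(t)\to0$. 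The jump coefficient is handled identically with $D(e)$ in place of $D_0$, integrating against $\nu(de)$. With this one modification, your proposal becomes the intended proof.
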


{\normalsize \begin{proof}
Similarly as in the previous proof, we find that
\begin{align*}
\mathbb E[\sup_{0\leq v\leq t}|\mathbf A_\alpha^{\eta}(v)|^2]
%%%%%%  &=\mathbb E\Big[\sup_{0\leq v\leq t}\sum_{i=1}^N\Big|\int_{-\delta}^0 F_\alpha^{\eta}(v+s)\mu_i(ds)\Big|^2\Big]\\
%%%%%%  &\leq  \mathbb E\Big[\sup_{0\leq v\leq t}\sum_{i=1}^N|\mu_i|\int_{-\delta}^0 |F_\alpha^{\eta}(v+s)|^2\mu_i(ds)\Big]\\
%%%%%&  \leq \mathbb E\Big[\sup_{0\leq v\leq t}\sum_{i=1}^N|\mu|^2\sup_{-\delta\leq s\leq 0}|F_\alpha^{\eta}(v+s)|^2\Big]\\
&\leq N|\mu|^2 \theta_s(t).
\end{align*}
The rest of the proof follows the exact same steps as the proof of Lemma 4.3 in \cite{DMOE} and is therefore omitted.
%To simplify, we assume that $b=\sigma=0$
%Now, using the integral representation of $X$ and $\mathcal X$, and adding and subtracting a term, we find that
%\small
%\begin{align*}
%A_{\alpha}^\eta(t)&=\int_0^t\int_{\mathbb R_0}\frac{1}{s}\Big(\gamma(s,\mathbf X^{\pi+\alpha\eta}(s), \pi(s)+\alpha\eta(s),e)-\gamma(s,\mathbf X^{\pi}(s), \pi(s)+\alpha\eta(s),e)\Big)\\&\quad-\nabla_{\mathbf x}\gamma(s,\mathbf X^{\pi}(s), \pi(s),e)\cdot \bm{\mathcal X}^{\eta,\pi}(s)\,\tilde{N}(de,ds)\\
%&+\int_0^t\int_{\mathbb R_0}\frac{1}{\alpha}\Big(\gamma(s,\mathbf X^\pi(s),\pi(s)+\alpha\eta(s), e)-\gamma(s,\mathbf X^\pi(s),\pi(s), e)\Big)\\
%&\quad-\frac{\partial\gamma}{\partial u}(s,\mathbf X\pi(s),\pi(s),e)\eta(s)\,\tilde{N}(de,ds)
%\end{align*}\normalsize
%Applying Kunita's inequality, we find that
%\begin{align}
%\theta_{\alpha}(t)&=\mathbb{E}[\sup_{-\delta\leq v\leq t}|A^\eta_\alpha(v)|^2]\leq \int_0^t C_{2,T}2 (I_{\alpha,1}(v)+I_{\alpha,2}(v)),
%\end{align}
%where
\end{proof}
}

\subsubsection{{\protect\normalsize Differentiability of the backward state
process}}

{\normalsize We will assume that the following convergence results hold for
all $t\geq0$
\begin{equation}
\mathbb{E}\left[  \underset{0\leq r\leq t}{\sup}\left\vert \frac{Y^{\pi
+\alpha\eta}(r)-Y^{\pi}(r)}{\alpha}-\mathcal{Y}(r)\right\vert ^{2}\right]
\rightarrow0, \label{y}%
\end{equation}%
\begin{equation}
\mathbb{E}\left[  \int_{0}^{T}\left\vert \frac{Z^{\pi+\alpha\eta}(t)-Z^{\pi
}(t)}{\alpha}-\mathcal{Z}(t)\right\vert ^{2}dt\right]  \rightarrow0,,
\label{z}%
\end{equation}
and
\begin{equation}
\text{ }\mathbb{E}\left[  \int_{0}^{T}\int_{%
%TCIMACRO{\U{211d} }%
%BeginExpansion
\mathbb{R}
%EndExpansion
_{0}}\left\vert \frac{K^{\pi+\alpha\eta}(t,e)-K^{\pi}(t,e)}{\alpha
}-\mathcal{K}(t,e)\right\vert ^{2}\nu(de)dt\right]  \rightarrow0 \label{k}%
\end{equation}
as $\alpha\rightarrow0.$ In particular, $\mathcal{Y}$, $\mathcal{Z}$ and
$\mathcal{K}$ are the solutions of (\ref{eq:DerivativeYZK}). We refer to
\cite{Elin} for more details.}

\subsection{{\protect\normalsize Differentiability of the performance
functional}}

\begin{lemma}
[\textbf{Differentiability of }$J$]Suppose $\pi,\mathcal{\eta}\in
\mathcal{A}_{\mathbb{G}}$ with $\mathcal{\eta}$ bounded. Suppose there exist
an interval $I\subset%
%TCIMACRO{\U{211d} }%
%BeginExpansion
\mathbb{R}
%EndExpansion
$ with $0\in I$, such that the perturbation $\pi+s\mathcal{\eta}$ is in
$\mathcal{A}_{\mathbb{G}}$ for each $s\in I.$ Then the function $s\mapsto
\dfrac{d}{ds}J(\pi+s\mathcal{\eta})$ has a (possibly one-sided) derivative at
$0$ with {\small
\begin{align*}
&  \frac{d}{ds}J(\pi+s\eta)\mid_{s=0}\\
&  =\mathbb{E}\left[  \int_{0}^{\infty}\nabla f(t,\mathbf{X}^{\pi}%
(t),\pi)\cdot(\mathcal{X}^{\pi}(t),\eta(t))^{T}dt+h^{\prime}(Y^{\pi}%
(0))\cdot\mathcal{Y}\text{ }^{\pi,\eta}(0)\right]  .
\end{align*}
}
\end{lemma}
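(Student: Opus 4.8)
The plan is to compute $\frac{d}{ds}J(\pi+s\eta)\big|_{s=0}$ directly from the difference quotient, identifying the limit with the claimed expression by means of the derivative processes of Lemma \ref{lemma:Jdifferentable1} and the convergence statements (\ref{y})--(\ref{k}). First I would split
\begin{equation*}
\frac{J(\pi+s\eta)-J(\pi)}{s}=\mathbb{E}\Big[\int_{0}^{\infty}\frac{\Delta_{s}f(t)}{s}\,dt\Big]+\mathbb{E}\Big[\frac{h(Y^{\pi+s\eta}(0))-h(Y^{\pi}(0))}{s}\Big],
\end{equation*}
where $\Delta_{s}f(t):=f(t,\pi+s\eta)-f(t,\pi)$ in the shorthand notation. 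For each fixed $(t,\omega)$ the fundamental theorem of calculus gives
\begin{equation*}
\frac{\Delta_{s}f(t)}{s}=\int_{0}^{1}\nabla f\big(t,\Xi_{s}^{\theta}(t)\big)\cdot\tfrac{1}{s}\,\Theta_{s}(t)\,d\theta,
\end{equation*}
where $\Theta_{s}(t)$ is the vector of increments of the arguments of $f$ (the increments of $\mathbf{X},\mathbb{E}[\mathbf{X}],Y,\mathbb{E}[Y],Z,K$, together with $s\eta$ in the control slot) and $\Xi_{s}^{\theta}(t)$ is the convex interpolation between the arguments evaluated at $\pi+s\eta$ and at $\pi$.

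Next I would pass to the limit $s\to0$ inside the $\theta$-integral. By Lemma \ref{lemma:Jdifferentable1} the scaled forward increment $\tfrac{1}{s}(\mathbf{X}^{\pi+s\eta}-\mathbf{X}^{\pi})$ converges, in $L^{2}$ over finite horizons, to the delayed derivative process $\mathbb{X}$, and its expectation to $\mathbb{E}[\mathbb{X}]$; the assumed convergences (\ref{y})--(\ref{k}) give the corresponding statements for the scaled increments of $Y,Z,K$, with limits $\mathcal{Y},\mathcal{Z},\mathcal{K}$. The increment in the control slot is exactly $\eta$ for every $s$. Writing $\Theta_{0}:=(\mathbb{X},\mathbb{E}[\mathbb{X}],\mathcal{Y},\mathbb{E}[\mathcal{Y}],\mathcal{Z},\mathcal{K},\eta)$, the interpolation point $\Xi_{s}^{\theta}(t)$ converges to the arguments at $\pi$, so by the Lipschitz continuity of $\nabla f$ (Assumption (V)(ii)) one has $\nabla f(t,\Xi_{s}^{\theta}(t))\to\nabla f(t,\pi)$, and the integrand converges a.e. to $\nabla f(t,\pi)\cdot\Theta_{0}(t)^{\mathsf{T}}$, which is the compact object $\nabla f(t,\mathbf{X}^{\pi}(t),\pi)\cdot(\mathcal{X}^{\pi}(t),\eta(t))^{\mathsf{T}}$ in the statement. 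That this limit is integrable is clear from the same inputs: $\nabla f$ is bounded and $\mathbb{X},\mathcal{Y},\mathcal{Z},\mathcal{K},\eta\in L^{2}(\Omega\times[0,\infty))$, so by Cauchy--Schwarz the product lies in $L^{1}$.

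The step I expect to be the main obstacle is justifying the interchange of $\lim_{s\to0}$ with $\mathbb{E}$ and the $dt$-integral, because the horizon is infinite whereas Lemma \ref{lemma:Jdifferentable1} and (\ref{y})--(\ref{k}) supply convergence only on finite intervals $[0,T]$, with Gronwall constants that grow in $T$. I would handle this by a Vitali/uniform-integrability argument after writing the integrand error as $\int_{0}^{1}[\nabla f(t,\Xi_{s}^{\theta})-\nabla f(t,\pi)]\cdot\tfrac{1}{s}\Theta_{s}\,d\theta+\nabla f(t,\pi)\cdot[\tfrac{1}{s}\Theta_{s}-\Theta_{0}]$: the first term is bounded by $L_{0}|\Theta_{s}|\cdot\tfrac{1}{s}|\Theta_{s}|=\tfrac{1}{s}|\Theta_{s}|^{2}=O(s)$ in $L^{1}$, and the second by $D_{0}\,|\tfrac{1}{s}\Theta_{s}-\Theta_{0}|$ since $\nabla f$ is bounded. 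On a fixed $[0,T]$ both vanish as $s\to0$ by the finite-horizon $L^{2}$ convergence, while the tails $\int_{T}^{\infty}$ are made uniformly small in $s$ through Cauchy--Schwarz together with the global $L^{2}$-integrability condition (\ref{growth}) and the decay estimate imposed on the derivative processes (and the estimates of \cite{Elin}); one then sends $s\to0$ first and $T\to\infty$ afterwards.

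Finally, the terminal contribution is easier, being a single evaluation at $t=0$: differentiability of $h$ with (\ref{y}) at $r=0$ yields $\tfrac{1}{s}(h(Y^{\pi+s\eta}(0))-h(Y^{\pi}(0)))\to h'(Y^{\pi}(0))\,\mathcal{Y}^{\pi,\eta}(0)$, and the passage to the limit under $\mathbb{E}$ is justified by the boundedness of $h'$ (Assumption (V)(ii)) and the $L^{2}$ convergence (\ref{y}). The resulting derivative is possibly one-sided precisely because $\pi+s\eta$ is only guaranteed admissible for $s$ in the interval $I$, so $s\to0$ may be a one-sided limit.
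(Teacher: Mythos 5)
Your proposal is correct and follows essentially the same route as the paper: the paper likewise reduces to finite horizons (via truncated functionals $J_n=\mathbb{E}[\int_0^n f\,dt+h(Y(0))]$), uses the derivative processes together with the assumed convergences (\ref{y})--(\ref{k}), treats the $h(Y^{\pi}(0))$-term by the same fundamental-theorem-of-calculus interpolation plus Cauchy--Schwarz, and disposes of the infinite horizon by a dominated-convergence step, which is your Vitali/uniform-integrability argument in different clothing. If anything, you are more explicit than the paper about the uniformity in $s$ needed to justify the interchange of limits, a point the paper compresses into a single sentence invoking boundedness of $\nabla f$ and $\eta$ and finiteness of $\mathcal{X}$.
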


\begin{proof}
Recall that {\small
\[
J(\pi)=\mathbb{E}\left[  \int_{0}^{\infty}f(t,\mathbf{X}^{\pi}(t),\pi
)dt+h(Y^{\pi}(0))\right]  .
\]
}
Let {\small
\[
J_{n}(\pi)=\mathbb{E}\left[  \int_{0}^{n}f(t,\mathbf{X}^{\pi}(t),\pi
)dt+h(Y^{\pi}(0))\right]  .
\]
}
We show that {\small
\begin{align*}
&  \frac{d}{ds}J_{n}(\pi+s\eta)\mid_{s=0}\\
&  =\mathbb{E}\left[  \int_{0}^{n}\nabla f(t,\mathbf{X}^{\pi}(t),\pi
)\cdot(\mathcal{X}^{\pi}(t),\eta(t))^{T}dt+h^{\prime}(Y^{\pi}(0))\cdot
\mathcal{Y}\text{ }^{\pi,\eta}(0)\right]  .
\end{align*}
}
Since $\nabla f$ and $\eta$ are bounded and $\mathcal{X}$ is finite, we can
use the dominated convergence theorem to get{\small
\[
\frac{d}{ds}J(\pi+s\eta)\mid_{s=0}=\underset{n\rightarrow\infty}{\lim}\frac
{d}{ds}J_{n}(\pi+s\eta)\mid_{s=0}.
\]
}
Now proving{\small
\[
\frac{d}{ds}\mathbb{E}\left[  h(Y^{\pi+s\eta}(0))\right]  \mid_{s=0}%
=\mathbb{E[}h^{\prime}(Y^{\pi}(0))\cdot\mathcal{Y}\text{ }^{\pi,\eta}(0)]
\]
}
{\small
\begin{align*}
& \left\vert \mathbb{E}\left[  \frac{1}{s}h(Y^{\pi+s\eta}(0))-h(Y^{\pi
}(0))-h^{\prime}(Y^{\pi}(0))\cdot\mathcal{Y}\text{ }^{\pi,\eta}(0)\right]
\right\vert \\
& =\left\vert \mathbb{E}\left[  \frac{1}{s}\int_{0}^{1}h^{\prime}(Y^{\pi
}(0)+\lambda(Y^{\pi+s\eta}(0))\cdot(Y^{\pi+s\eta}(0)-Y^{\pi}(0))d\lambda
\right.  \right.  \\
& \left.  \left.  -h^{\prime}(Y^{\pi}(0))\cdot\mathcal{Y}\text{ }^{\pi,\eta
}(0)\right]  \right\vert \\
& =\left\vert \mathbb{E}\left[  \int_{0}^{1}h^{\prime}(Y^{\pi}(0)+\lambda
(Y^{\pi+s\eta}(0)-Y^{\pi}(0))\cdot\left(  \frac{Y^{\pi+s\eta}(0)-Y^{\pi}%
(0)}{s}-\mathcal{Y}\text{ }^{\pi,\eta}(0)\right)  \right.  \right.  \\
& \left.  \left.  +\left\{  h^{\prime}(Y^{\pi}(0)+\lambda(Y^{\pi+s\eta
}(0)-Y^{\pi}(0))-h^{\prime}(Y^{\pi}(0))\cdot\mathcal{Y}\text{ }^{\pi,\eta
}(0)\right\}  d\lambda\right]  \right\vert \\
& \leq\int_{0}^{1}D\mathbb{E}\left[  \left\vert \frac{Y^{\pi+s\eta}(0)-Y^{\pi
}(0)}{s}-\mathcal{Y}\text{ }^{\pi,\eta}(0)\right\vert \right]  d\lambda\\
& +\int_{0}^{1}L\mathbb{E}\left[  \left\vert \lambda(Y^{\pi+s\eta}(0)-Y^{\pi
}(0))\right\vert \cdot\left\vert \mathcal{Y}\text{ }^{\pi,\eta}(0)\right\vert
\right]  d\lambda\\
& \rightarrow0\text{ as }s\rightarrow0
\end{align*}
}
we obtain the last estimation by, assumption of ${\normalsize \mathcal{Y}}$
$\left(  \ref{y}\right)  $ and apply Cauchy-Schwartz inequality, we obtain
\begin{align*}
& \mathbb{E}\left[  \left\vert Y^{\pi+s\eta}(0)-Y^{\pi}(0)\right\vert
\cdot\left\vert \mathcal{Y}\text{ }^{\pi,\eta}(0)\right\vert \right]  \\
& \leq\mathbb{E}\left[  \left\vert Y^{\pi+s\eta}(0)-Y^{\pi}(0)\right\vert
^{2}\right]  ^{\frac{1}{2}}\mathbb{E}\left[  \left\vert \mathcal{Y}\text{
}^{\pi,\eta}(0)\right\vert ^{2}\right]  ^{\frac{1}{2}}\\
& \rightarrow0\text{ as }s\rightarrow0.
\end{align*}
\end{proof}

\begin{lemma}
{\normalsize \label{lemma:JdifferentialH} [\textbf{Differentiability of $J$ in
terms of the Hamiltonian}] Let $\pi,\eta\in\mathcal{A}_{\mathbb{G}}$ with
$\eta$ bounded. Let $X,Y,Z,K,p,q,r,\lambda$ be the state and corresponding to
$\pi$ adjoint processes , and $\mathcal{X},\mathcal{Y},\mathcal{Z}%
,\mathcal{K},$ be derivative processes corresponding to $\pi,\eta$. Suppose
there exists adjoint processes $p,q,r$ corresponding to $\pi$ and that
\begin{align}
&  \underset{T\rightarrow\infty}{\lim}\mathbb{E}[\mathbf{\ }{p}(T)\mathcal{X}%
(T)]=0,\label{eq:p_theta_condition}\\
&  \underset{T\rightarrow\infty}{\lim}\mathbb{E}[{\lambda}(T)\mathcal{Y}%
(T)]=0. \label{eq:lambda_phi_condition}%
\end{align}
Then
\[
\dfrac{d}{ds}J(\pi+s\eta)\mid_{_{s=0}}=\mathbb{E}\left[  \int_{0}^{\infty
}\frac{\partial H}{\partial\pi}\left(  t,\pi\right)  \eta(t)dt\right]  .
\]
}
\end{lemma}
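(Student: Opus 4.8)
The plan is to combine the previous lemma's expression for $\frac{d}{ds}J(\pi+s\eta)|_{s=0}$ in terms of $\nabla f$ and $\mathcal{Y}(0)$ with an It\^o-formula computation that rewrites these quantities using the adjoint processes and the Hamiltonian. Starting from Lemma~4.3 (the differentiability lemma just proved), we have
\[
\frac{d}{ds}J(\pi+s\eta)\Big|_{s=0}=\mathbb{E}\left[\int_0^\infty \nabla f(t,\pi)\cdot(\mathcal{X}(t),\eta(t))^{\mathsf{T}}dt+h'(Y(0))\mathcal{Y}(0)\right].
\]
The idea is to express the running-cost term and the $\mathcal{Y}(0)$ term through the adjoint equations, so that everything collapses to the single term $\frac{\partial H}{\partial\pi}(t,\pi)\eta(t)$.

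**First I would** apply It\^o's formula to the products $p(t)\mathcal{X}(t)$ and $\lambda(t)\mathcal{Y}(t)$ on a finite interval $[0,T]$. For $p(t)\mathcal{X}(t)$, using the backward adjoint equation (\ref{eq:pqr}) for $dp$ and the forward derivative equation (\ref{eq:DerivativeX}) for $d\mathcal{X}$, the drift produces $-\mathbb{E}[\Upsilon(t)|\mathfrak{F}_t]\mathcal{X}(t)$ together with terms pairing $p,q,r$ against the $\nabla b,\nabla\sigma,\nabla\gamma$ coefficients acting on $(\mathbb{X}(t),\mathbb{E}[\mathbb{X}(t)],\eta(t))$. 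Likewise, applying It\^o to $\lambda(t)\mathcal{Y}(t)$ using (\ref{eq:lambda}) and (\ref{eq:DerivativeYZK}) produces the $-\nabla g$ pairing against $\lambda$ together with the $\frac{\partial H}{\partial y}$, $\frac{\partial H}{\partial z}$, $\nabla_k H$ terms and their expectation counterparts. Taking expectations kills all the martingale (local-martingale, after the stopping-time localization as in the sufficient-principle proof) contributions. The key point is that the definition (\ref{eq:Upsilon}) of $\Upsilon$ encodes exactly the delayed $\frac{\partial H}{\partial x_i}$ and $\frac{\partial H}{\partial m_i}$ contributions, so that after a Fubini/change-of-variable step identical to equation (\ref{nacira}) in the earlier proof, the $\mathbb{E}[\Upsilon(t)|\mathfrak{F}_t]\mathcal{X}(t)$ term recombines with the delay integrals $\int_{-\delta}^0\mathcal{X}(t+s)\mu_i(ds)=\mathbb{X}(t)$ appearing in the derivative-process coefficients.

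**The main bookkeeping** is then to collect all the pairings. By the definition of $H$ in (\ref{haminf}), the sum of $p\,\nabla b+q\,\nabla\sigma+\int r\,\nabla\gamma\,\nu(de)+\lambda\,\nabla g+\nabla f$ applied to $(\mathbb{X},\mathbb{E}[\mathbb{X}],\mathcal{Y},\mathbb{E}[\mathcal{Y}],\mathcal{Z},\mathcal{K},\eta)$ is precisely $\nabla H(t,\pi)$ applied to the same vector. Everything except the $\frac{\partial H}{\partial\pi}(t,\pi)\eta(t)$ term must cancel against the adjoint drifts: the $\mathbf{x}$- and $\mathbf{m}$-derivative parts cancel against $\Upsilon$ (after the Fubini shift), the $y$- and $z$- and $k$-parts cancel against the $d\lambda$ drift, and the boundary terms $\mathbb{E}[p(T)\mathcal{X}(T)]$ and $\mathbb{E}[\lambda(T)\mathcal{Y}(T)]$ vanish as $T\to\infty$ by hypotheses (\ref{eq:p_theta_condition}) and (\ref{eq:lambda_phi_condition}). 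The remaining boundary data $\lambda(0)=h'(Y(0))$ and $p(0)\mathcal{X}(0)=0$ (since $\mathcal{X}\equiv 0$ on $[-\delta,0]$) exactly reproduce the $h'(Y(0))\mathcal{Y}(0)$ term and eliminate the forward initial contribution.

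**The hard part** will be the delay manipulation: handling the conditional expectation $\mathbb{E}[\Upsilon(t)|\mathfrak{F}_t]$ together with the non-adaptedness of $\Upsilon$ (noted in the text), and justifying the Fubini interchange and change of variable $r=t-s$ on the infinite horizon. One must argue that the tower property lets $\mathbb{E}[\mathbb{E}[\Upsilon(t)|\mathfrak{F}_t]\mathcal{X}(t)]=\mathbb{E}[\Upsilon(t)\mathcal{X}(t)]$ (since $\mathcal{X}(t)$ is $\mathfrak{F}_t$-measurable), and that the integrability/decay conditions (\ref{growth}) and (\ref{decayp}), plus the convergence Lemmas~4.1 and 4.2 and the assumed convergences (\ref{y})--(\ref{k}), guarantee all integrals converge and the $T\to\infty$ boundary terms vanish. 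I would carry out the computation first on $[0,T]$, verify the clean cancellation there, and only then pass to the limit, mirroring the finite-horizon sufficient-principle proof but reading the It\^o identities in the opposite direction.
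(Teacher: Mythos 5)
Your proposal is correct and follows essentially the same route as the paper's proof: Itô's formula applied to $p(t)\mathcal{X}(t)$ and $\lambda(t)\mathcal{Y}(t)$ with stopping-time localization, the chain-rule identity $\nabla H-\lambda\nabla g-\nabla f=p\nabla b+q\nabla\sigma+\int_{\mathbb{R}_0} r\nabla\gamma\,\nu(de)$, the Fubini/change-of-variable shift that cancels the delay terms in $\Upsilon$ against the $\mathbf{x}$- and $\mathbf{m}$-components of $\nabla H$ (using $\mathcal{X}\equiv 0$ on $[-\delta,0]$), the transversality hypotheses to kill the boundary terms, and finally combination with the preceding differentiability lemma. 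The only difference is presentational: the paper works with the explicit stopped quantities $p(\tau_n)\mathcal{X}(\tau_n)$, $\lambda(\tau_n)\mathcal{Y}(\tau_n)$ and passes to the limit by dominated convergence before invoking the hypotheses, whereas you describe the same steps on $[0,T]$ and then let $T\to\infty$, which is equivalent.
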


{\normalsize \begin{proof}
For fixed $T\geq 0$, define a sequence of stopping times, as follows
\small\begin{align}\begin{split}
\tau_n(\cdot):=T\wedge\inf\bigg\{&t\geq 0:\int_0^t\Big(  p(t) \nabla \sigma(t,\pi)\cdot \Big(  \mathbb{X}(t),\mathbb{E}[\mathbb{X}(t)],\eta(t)\Big)^{\mathsf{T}} +\mathcal{X}(t)q(t)    \Big)^2 ds\\
&+ \int_0^t \int_{\mathbb R_0}\Big(  r(s,e)\nabla\gamma(s,\pi,e)\cdot\left(  \mathbb{X}(s),\mathbb{E}[\mathbb{X}(s)],\eta(s)\right)^{\mathsf{T}}\\
& + p(s)\nabla\gamma(s,\pi,e)\cdot\left(  \mathbb{X}(s),\mathbb{E}[\mathbb{X}(s)],\eta(s)\right)^{\mathsf{T}} + \mathcal{X}(s)r(s,\pi,e)   \Big)^2\nu(de)ds\\
&+\int_0^t\Big( \mathcal{Y}(s)\dfrac{\partial H}{\partial z}(s,\pi)+ \lambda(s)\mathcal{Z}(s)\Big)^2 ds\\
&\int_0^{t}\int_{\mathbb R_0}\Big(\mathcal{Y}(s)\nabla_k H(s,e)     +\lambda(s)\mathcal{K}(s,e)+ \mathcal{K}(s,e)\nabla_k H(s,e)\Big)^2\nu(de) ds\geq n\bigg\} ,\quad n\in\mathbb N
\end{split}\end{align}\normalsize
now it clearly holds that $\tau_n\rightarrow T$ $P$-a.s.
Observe  that, with a slight abuse of notation(in particular, we write $\nabla b$ etc. both when we consider
it as a Fréchet derivative  with respect the spacial variables from (\ref{EQ27}) and when considering it as the gradient with respect to all the spacial variables of the Hamiltonian, $H$), it holds that
\begin{align}\label{eq:nablaH}
\nabla H-\lambda\nabla g-\nabla f=p\nabla b+q\nabla \sigma+\int_{\mathbb R_0}r\nabla \gamma \,d\nu.
\end{align}(This can be shown using the Chain rule for the Fréchet derivative.)
By It\^{o}'s formula, we can compute that
\small\begin{align}\label{eq:p_theta}\begin{split}
p(\tau_n)\mathcal{X}(\tau_n)&=\int_0^{\tau_n} \Big(p(t)\nabla b(t,\pi)+q(t)\nabla \sigma(t,\pi)  + \int_{\mathbb{R}_0}r(t,e)\nabla\gamma(t,\pi,e) \nu(de) \Big)\\
&\quad   \cdot\Big(  \mathbb{X}(t),\mathbb E[\mathbb{X}(t)],\eta(t)\Big)^{\mathsf{T}}- \mathcal{X}(t) \mathbb E[\Upsilon(t)|\mathfrak F_t]\, dt\\
&+\int_0^{\tau_n} p(t) \nabla \sigma(t,\pi)\cdot \Big(  \mathbb{X}(t),\mathbb E[\mathbb{X}(t)],\eta(t)\Big)^{\mathsf{T}} +\mathcal{X}(t)q(t)dB(t)\\
&+\int_0^{\tau_n}\int_{\mathbb R_0}r(t,e)\nabla\gamma(t,\pi,e)\cdot\left(  \mathbb{X}(t),\mathbb E[\mathbb{X}(t)],\eta(t)\right)^{\mathsf{T}}\\
& + p(t)\nabla\gamma(t,\pi,e)\cdot\left(  \mathbb{X}(t),\mathbb E[\mathbb{X}(t)],\eta(t)\right)^{\mathsf{T}} + \mathcal{X}(t)r(t,\pi,e)\tilde N(dt,de)
\end{split}\end{align}\normalsize
The stochastic integral parts have zero mean by definition of the stopping time, and we recall that $\mathcal{X}(0)=0$.
Observe that since we have required that all solutions of the  state and adjoint equations belongs to the spaces $L^2(\Omega\times[0,\infty))$ or $L^2(\Omega\times[0,\infty)\times \mathbb R_0)$, and that the gradient of the coefficient functionals are bounded,   it holds that
\small\begin{align}\label{ineq:intCondNMP}\begin{split}
\mathbb E\Big[ \int_0^{\infty} &\Big|\Big(p(t)\nabla b(t,\pi)+q(t)\nabla \sigma(t,\pi)  + \int_{\mathbb{R}_0}r(t,e)\nabla\gamma(t,\pi,e) \nu(de) \Big)\\
&\quad   \cdot\Big( \mathbb{X}(t),\mathbb E[\mathbb{X}(t)],\eta(t)\Big)^{\mathsf{T}}\Big|+ |\mathcal{X}(t)\Upsilon(t)|\,dt\Big]<\infty.
\end{split}
\end{align}\normalsize
Now,
\begin{align}\small\label{eq:EpTau}\begin{split}
\mathbb E[p(T)&\mathcal{X}(T)]=\lim_{n\rightarrow \infty}\mathbb E[p(\tau_n)\mathcal{X}(\tau_n)]\\
&=\mathbb E\Big[\int_0^{\tau_n} \Big(p(t)\nabla b(t,\pi)+q(t)\nabla \sigma(t,\pi)  + \int_{\mathbb{R}_0}r(t,e)\nabla\gamma(t,\pi,e) \nu(de) \Big)\\
&\quad   \cdot\Big( \mathbb{X}(t),\mathbb E[\mathbb{X}(t)],\eta(t)\Big)^{\mathsf{T}}- \mathcal{X}(t) \mathbb E[\Upsilon(t)|\mathfrak F_t]\, dt\Big]\\
&=\lim_{n\rightarrow \infty}\mathbb E\Big[ \int_0^{\tau_n} \bigg(\nabla H(t,\pi)-\lambda(t)\nabla g(t,\pi)-\nabla f(t,\pi)\bigg)\\
&\quad\cdot\Big(  \mathbb{X}(t),\mathbb{E}[\mathbb{X}(t)],\mathcal{Y}(t),\mathbb{E}[\mathcal{Y}(t)],\mathcal{Z}(t),\mathcal{K}(t),\eta(t)\Big)^{\mathsf T} - \mathcal{X}(t) \Upsilon(t)dt\Big]\\
&=\mathbb E\Big[ \int_0^{\tau_n} \bigg(\nabla H(t,\pi)-\lambda(t)\nabla g(t,\pi)-\nabla f(t,\pi)\bigg)\\
&\quad\cdot\Big(  \mathbb{X}(t),\mathbb{E}[\mathbb{X}(t)],\mathcal{Y}(t),\mathbb{E}[\mathcal{Y}(t)],\mathcal{Z}(t),\mathcal{K}(t),\eta(t)\Big)^{\mathsf T} - \mathcal{X}(t) \Upsilon(t)dt\Big].
\end{split}
\normalsize\end{align}
In the first and last equality, we have used Lebesgue's dominated convergence theorem and that the integrand is dominated by the integrable random variable in (\ref{ineq:intCondNMP}). In the second equality, we have used the integral representation (\ref{eq:p_theta}) of $p(\tau_n)\mathcal{X}(\tau_n)$ and that the stochastic integrals have zero mean by definition of the stopping times $\tau_n$, and in the third equality, we have used  (\ref{eq:nablaH}).
From the assumption (\ref{eq:lambda_phi_condition}), and again using the fact that the integrands are dominated by the integrable random variable in (\ref{ineq:intCondNMP}), we find that\begin{align}\small\label{eq:Ept}\begin{split}
0=&\lim_{T\rightarrow\infty}\mathbb E[p(T)\mathcal{X}(T)]=\mathbb E\Big[ \int_0^{\infty} \bigg(\nabla H(t,\pi)-\lambda(t)\nabla g(t,\pi)-\nabla f(t,\pi)\bigg)\\
&\quad\cdot\Big(  \mathbb{X}(t),\mathbb{E}[\mathbb{X}(t)],\mathcal{Y}(t),\mathbb{E}[\mathcal{Y}(t)],\mathcal{Z}(t),\mathcal{K}(t),\eta(t)\Big)^{\mathsf T} - \mathcal{X}(t) \Upsilon(t)\,dt\Big].
\end{split}\normalsize
\end{align}
Similarly, using It\^{o}'s formula, we compute that \begin{align}\label{eq:lambda_phi}\small\begin{split}
\lambda(\tau_n)\mathcal{Y}(\tau_n)&-\lambda(0)\mathcal{Y}(0)= \int_0^{\tau_n}\Big\{\mathcal{Y}(t)\bigg(\frac{\partial H}{\partial y}(t,\pi)+ \mathbb E\Big[\frac{\partial H}{\partial n}(t,\pi)\Big]\bigg)\\
&+\lambda(t)\Big(-\nabla g(t,\pi)\Big)\cdot\Big(  \mathbb{X}(t),\mathbb{E}[\mathbb{X}(t)],\mathcal{Y}(t),\mathbb{E}[\mathcal{Y}(t)],\mathcal{Z}(t),\mathcal{K}(t),\eta(t)\Big)^{\mathsf T}\\
&+\mathcal{Z}(t)\frac{\partial H}{\partial z}(t,\pi)+\int_{\mathbb R_0}\mathcal{K}(t,e)\nabla _{k}H(t,e)\nu (de)\Big\}dt\\
&+\int_0^{\tau_n} \mathcal{Y}(t)\dfrac{\partial H}{\partial z}(t,\pi)+ \lambda(t)\mathcal{Z}(t) dB(t)\\
&+\int_0^{\tau_n}\int_{\mathbb R_0}\mathcal{Y}(t)\nabla _{k}H(t,e)     +\lambda(t)\mathcal{K}(t,e)+ \mathcal{K}(t,e)\nabla _{k}H(t,e)  \tilde N(dt, de).
\end{split}\normalsize\end{align}
%Now, using the assumption (\ref{eq:p_theta_condition}), equation (\ref{eq:Ept}), and  we find that
%\small\begin{align}\label{lim:p_theta}\begin{split}
%0&=\underset{T\rightarrow\infty}{\lim}\mathbb{E}[\mathbf{\ }{p}
%(T)\mathcal{X}(T)]\\
%&=\lim_{T\rightarrow \infty} \mathbb E\Big[ \int_0^{T} \bigg(\nabla H(t,\pi)-\lambda(t)\nabla g(t,\pi)-\nabla f(t,\pi)\bigg)\\
%&\quad \cdot\bigg(  \bm{\mathcal{X}}(t),\mathbb{E}[\bm{\mathcal{X}}(t)],\mathcal{Y}(t),\mathbb{E}[\mathcal{Y}(t)],\mathcal{Z}(t),\mathcal{K}(t),\eta(t)\bigg)^{\mathsf T}- \mathcal{X}(t) \Upsilon(t)dt\Big]\\
%&=\Big[\int_0^\infty \bigg(\nabla H(t,\pi)-\lambda(t)\nabla g(t,\pi)-\nabla f(t,\pi)\bigg)\\
%&\quad \cdot\bigg(  \bm{\mathcal{X}}(t),\mathbb{E}[\bm{\mathcal{X}}(t)],\mathcal{Y}(t),\mathbb{E}[\mathcal{Y}(t)],\mathcal{Z}(t),\mathcal{K}(t),\eta(t)\bigg)^{\mathsf T}- \mathcal{X}(t) \Upsilon(t)dt\Big]
%\end{split}\end{align}\normalsize
We recall that  $\lambda(0)=h'(Y(0))$. Then proceeding as above, we find that
\begin{align}\label{lim:lambda_phi}
\begin{split}
0&=\lim_{T\rightarrow \infty}\mathbb E\Big[ \lambda(T)\mathcal{Y}(T)  \Big]\\
&=\mathbb E\Big[ h'(Y(0))\mathcal{Y}(0)  \Big]
%+\lim_{T\rightarrow \infty}\mathbb E\Big[    \int_0^T\Big\{\mathcal{Y}(t)\bigg(\frac{\partial H}{\partial y}(t,\pi)+ \mathbb E\Big[\frac{\partial H}{\partial n}(t,\pi)\Big]\bigg)\\
%&+\lambda(t)\Big(-\nabla g(t,\pi)\Big)\cdot\Big(  \bm{\mathcal{X}}(t),\mathbb{E}[\bm{\mathcal{X}}(t)],\mathcal{Y}(t),\mathbb{E}[\mathcal{Y}(t)],\mathcal{Z}(t),\mathcal{K}(t),\eta(t)\Big)^{\mathsf T}\\
%&+\mathcal{Z}(t)\frac{\partial H}{\partial z}(t,\pi)+\int_{\mathbb R_0}\mathcal{K}(t,e)\frac{\partial H}{\partial k}(t,e)\nu (de)\Big\} \,dt\Big]\\
+E\Big[    \int_0^\infty\Big\{\mathcal{Y}(t)\bigg(\frac{\partial H}{\partial y}(t,\pi)+ \mathbb E\Big[\frac{\partial H}{\partial n}(t,\pi)\Big]\bigg)\\
&+\lambda(t)\Big(-\nabla g(t,\pi)\Big)\cdot\Big(  \mathbb{X}(t),\mathbb{E}[\mathbb{X}(t)],\mathcal{Y}(t),\mathbb{E}[\mathcal{Y}(t)],\mathcal{Z}(t),\mathcal{K}(t),\eta(t)\Big)^{\mathsf T}\\
&+\mathcal{Z}(t)\frac{\partial H}{\partial z}(t,\pi)+\int_{\mathbb R_0}\mathcal{K}(t,e)\nabla _{k}H(t,e)\nu (de)\Big\} dt\Big]
\end{split}
\end{align}
Now, combining Lemma 4.4 with the equations (\ref{eq:Ept}) and (\ref{lim:lambda_phi}) yields
\small
\begin{align}
\frac{d}{d\alpha}\mathbb E &\Big[J(\pi+\alpha\eta)  \Big]\Big|_{\alpha=0}=\mathbb{E}\Big[
h^{\prime}(Y(0))\,\mathcal{Y}(0)\nonumber\\
&   +\int_{0}^{\infty}\nabla f(t,\pi)\cdot\Big(\mathbb{X}(t), \mathbb E[\mathbb{X}(t)], \mathcal{Y}(t), \mathbb E[\mathcal{Y}(t)], \mathcal{Z}(t), \mathcal{K}(t,\cdot), \eta(t)\Big)^{\mathsf T}dt\Big]\nonumber\\
&= \mathbb E\Big[\int_0^\infty \lambda(t)\Big(\nabla g(t,\pi)\Big)\cdot\Big(  \mathbb{X}(t),\mathbb{E}[\mathbb{X}(t)],\mathcal{Y}(t),\mathbb{E}[\mathcal{Y}(t)],\mathcal{Z}(t),\mathcal{K}(t),\eta(t)\Big)^{\mathsf T}\nonumber\\
&-\Big\{\mathcal{Y}(t)\bigg(\frac{\partial H}{\partial y}(t,\pi)+ \mathbb E\Big[\frac{\partial H}{\partial n}(t,\pi)\Big]\bigg)+ \mathcal{Z}(t)\frac{\partial H}{\partial z}(t,\pi)+\int_{\mathbb R_0}\nabla_k H(t,\pi)\mathcal{K}(t,e)\nu (de)\Big\}\, dt\Big]\nonumber\\
&+\mathbb E\Big[\int_0^\infty \bigg(\nabla H(t,\pi)-\lambda(t)\nabla g(t,\pi)\bigg)\nonumber\\
&\quad \cdot\bigg(  \mathbb{X}(t),\mathbb{E}[\mathbb{X}(t)],\mathcal{Y}(t),\mathbb{E}[\mathcal{Y}(t)],\mathcal{Z}(t),\mathcal{K}(t),\eta(t)\bigg)^{\mathsf T}- \mathcal{X}(t) \Upsilon(t)\, dt\Big]\nonumber\\
&=\mathbb E\Big[\int_0^\infty \nabla H(t,\pi)\cdot\bigg(  \mathbb{X}(t),\mathbb{E}[\mathbb{X}(t)],\mathcal{Y}(t),\mathbb{E}[\mathcal{Y}(t)],\mathcal{Z}(t),\mathcal{K}(t),\eta(t)\bigg)^{\mathsf T}\nonumber\\
&-\Big\{ \sum_{i=0}^{N-1}\int_{-\delta}^0 \mathcal{X}(t)\Big( \frac{\partial H}{\partial x_i}(t-s,\pi) +\mathbb E\Big[\frac{\partial H}{\partial m_i}(t-s,\pi) \Big]\Big)\mu_i(ds)\nonumber\\
&+ \mathcal{Y}(t)\bigg(\frac{\partial H}{\partial y}(t,\pi)+ \mathbb E\Big[\frac{\partial H}{\partial n}(t,\pi)\Big]\bigg)+ \mathcal{Z}(t)\frac{\partial H}{\partial z}(t,\pi)+\int_{\mathbb R_0}\mathcal{K}(t,e)\nabla_k H(t,e)\nu (de)  \Big\} dt\Big]\nonumber\\
&=\mathbb E\Big[\sum_{i=1}^N\Big\{\frac{\partial H}{\partial x_i}(t,\pi)\cdot\int_{-\delta}^0 \mathcal{X}(t+s)\mu_i(ds)+\frac{\partial H}{\partial m_i}(t,\pi)\cdot\mathbb E\big[\int_{-\delta}^0 \mathcal{X}(t+s)\mu_i(ds)\big]   \Big\}\label{NMPproof_tmp1}\\
&\quad+ \frac{\partial H}{\partial y}(t,\pi)\mathcal{Y}(t)+\frac{\partial H}{\partial n}\mathbb E[\mathcal{Y}(t)]+ \frac{\partial H}{\partial z}(t,\pi)\mathcal{Z}(t)+ \int_{\mathbb R_0}\nabla_k H(t,\pi)(e)\mathcal{K}(t,e)\nu(de)\label{NMPproof_tmp2}\\
&\quad+\frac{\partial H}{\partial u}(t,\pi)\eta(t)\\
&\quad-\Big\{ \sum_{i=0}^{N-1}\int_{-\delta}^0 \mathcal{X}(t)\Big( \frac{\partial H}{\partial x_i}(t-s,\pi) +\mathbb E\Big[\frac{\partial H}{\partial m_i}(t-s,\pi) \Big]\Big)\mu_i(ds)\label{NMPproof_tmp3}\\
&\quad+ \mathcal{Y}(t)\bigg(\frac{\partial H}{\partial y}(t,\pi)+ \mathbb E\Big[\frac{\partial H}{\partial n}(t,\pi)\Big]\bigg)+ \mathcal{Z}(t)\frac{\partial H}{\partial z}(t,\pi)+\int_{\mathbb R_0}\nabla_k H(t,e)\mathcal{K}(t,e)\nu (de)  \Big\} dt\Big]\label{NMPproof_tmp4}\\
&=\mathbb E \Big[\int_0^\infty\frac{\partial H}{\partial \pi}(t,\pi) \eta(t)dt\Big],\nonumber
\end{align}
\normalsize which is what we wanted to prove.
In order to see why the last equality holds, observe that one may use Fubini's theorem to show that the sum of the lines (\ref{NMPproof_tmp2}) and (\ref{NMPproof_tmp4}) is $0$. Also, we have that the sum of the  lines (\ref{NMPproof_tmp1}) and (\ref{NMPproof_tmp3}) is $0$. To see why the latter holds, recall that $\mathcal{X}(r)=0,$ when $r<0$, and perform the change of variable $r=t-s$ in the $dt$-integral to observe  that
\small\begin{equation}
\begin{split}
\mathbb E\Big[&\int_0^\infty  \int_{-\delta}^0 \mathcal{X}(t)\Big( \frac{\partial H}{\partial x_i}(t-s,\pi) +\mathbb E\Big[\frac{\partial H}{\partial m_i}(t-s,\pi) \Big]\Big)\mu_i(ds) dt\Big]\\
&=\mathbb E\Big[\int_{-\delta}^0\int_0^\infty  \mathcal{X}(t)\Big( \frac{\partial H}{\partial x_i}(t-s,\pi) +\mathbb E\Big[\frac{\partial H}{\partial m_i}(t-s,\pi) \Big]\Big)dt\,\mu_i(ds) \Big]\\
&=\mathbb E\Big[\int_{-\delta}^0\int_s^\infty \mathcal{X}(t)\Big( \frac{\partial H}{\partial x_i}(t-s,\pi) +\mathbb E\Big[\frac{\partial H}{\partial m_i}(t-s,\pi) \Big]\Big)dt\,\mu_i(ds) \Big]\\
&=\mathbb E\Big[\int_{-\delta}^0\int_0^\infty \mathcal{X}(r+s)\Big( \frac{\partial H}{\partial x_i}(r,\pi) +\mathbb E\Big[\frac{\partial H}{\partial m_i}(r,\pi) \Big]\Big)dt\,\mu_i(ds) \Big]\\
\color{red}
&=\mathbb E\Big[\int_0^\infty \int_{-\delta}^0\mathcal{X}(t+s)\Big( \frac{\partial H}{\partial x_i}(t,\pi) +\mathbb E\Big[\frac{\partial H}{\partial m_i}(t,\pi) \Big]\Big)\mu_i(ds)\,dt \Big]\\
&=\mathbb E\Big[\int_{0}^\infty \int_{-\delta}^0 \mathcal{X}(t+s)\Big( \frac{\partial H}{\partial x_i}(t,\pi) +\mathbb E\Big[\frac{\partial H}{\partial m_i}(t,\pi) \Big]\Big)\,\mu_i(ds)dt \Big]
\end{split}
\end{equation}\normalsize
\end{proof}
}

\begin{theorem}
[Necessary maximum principle]{\normalsize Under the assumptions of Lemma 4.5,
we can prove the equivalence between: }

{\normalsize $(i)$ For each bounded $\eta\in\mathcal{A}_{\mathbb{G}},$%
\[
0=\dfrac{d}{ds}J(\pi+s\eta)\mid_{_{s=0}}=\mathbb{E}\left[  \int_{0}^{\infty
}\frac{\partial H}{\partial\pi}\left(  t,\pi\right)  \eta(t)dt\right]
\]
}

{\normalsize $(ii)$ For each $t\in\lbrack0,\infty),$%
\[
\mathbb{E}\left[  \frac{\partial H}{\partial\pi}\left(  t,\pi\right)
\Big \vert\mathfrak{G}_{t}\right]  _{\pi=\pi^{\ast}(t)}=0\text{ a.s.}%
\]
}
\end{theorem}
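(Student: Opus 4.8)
The plan is to reduce everything to the integral identity supplied by Lemma~\ref{lemma:JdifferentialH}, namely $\frac{d}{ds}J(\pi+s\eta)\mid_{s=0}=\mathbb{E}[\int_{0}^{\infty}\frac{\partial H}{\partial\pi}(t,\pi)\eta(t)\,dt]$, valid for every bounded $\eta\in\mathcal{A}_{\mathbb{G}}$. Thus $(i)$ is equivalent to the statement that $\mathbb{E}[\int_{0}^{\infty}\frac{\partial H}{\partial\pi}(t,\pi)\eta(t)\,dt]=0$ for all such $\eta$, and it remains to prove that this holds if and only if $(ii)$ does.

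For the implication $(ii)\Rightarrow(i)$ I would fix an arbitrary bounded $\eta\in\mathcal{A}_{\mathbb{G}}$. Since each $\eta(t)$ is $\mathfrak{G}_{t}$-measurable, Fubini's theorem (justified by the integrability built into the hypotheses of Lemma~\ref{lemma:JdifferentialH}) together with the tower property gives
\[
\mathbb{E}\Big[\int_{0}^{\infty}\tfrac{\partial H}{\partial\pi}(t,\pi)\eta(t)\,dt\Big]=\int_{0}^{\infty}\mathbb{E}\Big[\eta(t)\,\mathbb{E}\big[\tfrac{\partial H}{\partial\pi}(t,\pi)\mid\mathfrak{G}_{t}\big]\Big]\,dt=0,
\]
where the $\mathfrak{G}_{t}$-measurable factor $\eta(t)$ has been moved inside the conditional expectation. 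This is precisely $(i)$.

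The substantive direction is $(i)\Rightarrow(ii)$, and here the essential ingredient is the family of spike perturbations furnished by Assumption~(V): for fixed $t_{0}\geq0$ and any bounded $\mathfrak{G}_{t_{0}}$-measurable $\alpha$, the process $\eta(t)=\alpha\,\mathbf{1}_{[t_{0},t_{0}+h)}(t)$ lies in $\mathcal{A}_{\mathbb{G}}$ for all small $h>0$. Feeding this $\eta$ into $(i)$ and using Fubini yields
\[
\int_{t_{0}}^{t_{0}+h}\mathbb{E}\Big[\alpha\,\tfrac{\partial H}{\partial\pi}(t,\pi)\Big]\,dt=0
\]
for every such $h$. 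Writing $\phi(t):=\mathbb{E}[\alpha\,\frac{\partial H}{\partial\pi}(t,\pi)]$, which is locally integrable by the growth/boundedness hypotheses of Assumption~(V) and the $L^{2}$-integrability of the state and adjoint processes, I would divide by $h$ and let $h\downarrow0$. Differentiating the (identically vanishing) primitive at $t_{0}$ gives $\phi(t_{0})=0$, that is $\mathbb{E}[\alpha\,\frac{\partial H}{\partial\pi}(t_{0},\pi)]=0$. As $\alpha$ is $\mathfrak{G}_{t_{0}}$-measurable, this equals $\mathbb{E}[\alpha\,\mathbb{E}[\frac{\partial H}{\partial\pi}(t_{0},\pi)\mid\mathfrak{G}_{t_{0}}]]$; letting $\alpha$ range over all bounded $\mathfrak{G}_{t_{0}}$-measurable random variables forces $\mathbb{E}[\frac{\partial H}{\partial\pi}(t_{0},\pi)\mid\mathfrak{G}_{t_{0}}]=0$ a.s., and since $t_{0}$ is arbitrary we obtain $(ii)$.

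The step I expect to be most delicate is the passage from the integrated identity to its pointwise-in-$t$ form. The Lebesgue differentiation theorem by itself yields $\phi(t_{0})=0$ only for almost every $t_{0}$; to obtain the conclusion for \emph{every} $t_{0}$, as stated, one must invoke a right-continuity property of $t\mapsto\phi(t)$, which relies on the right-continuity of the underlying state and adjoint processes, so that the right-hand derivative of the vanishing primitive at $t_{0}$ equals the right limit $\phi(t_{0}^{+})=\phi(t_{0})$. Care is also needed to check the integrability underlying the two applications of Fubini and the local integrability of $\phi$; both follow from Assumption~(V) together with the $L^{2}$-bounds imposed in the definition of $\mathcal{A}_{\mathbb{G}}$ and of the adjoint processes.
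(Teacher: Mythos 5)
Your proposal is correct and takes essentially the same route as the paper: the paper's own ``proof'' simply defers to the derivative formula of Lemma 4.5 and to the proof of Theorem 4.1 in \cite{AO}, and that argument is precisely what you have written out --- tower property plus Fubini for $(ii)\Rightarrow(i)$, and spike perturbations $\eta(t)=\alpha\,\mathbf{1}_{[t_{0},t_{0}+h)}(t)$ (available by Assumption (V)(iii)) together with division by $h$ for $(i)\Rightarrow(ii)$. Your remark that Lebesgue differentiation alone only gives the conclusion for a.e.\ $t_{0}$, and that right-continuity of the c\`adl\`ag state/adjoint processes is what upgrades it to every $t_{0}$, is a genuine subtlety that the cited argument also implicitly relies on, so flagging it is a point in your favour rather than a gap.
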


\begin{proof}
{\normalsize Using Lemma $4.4$, the proof is similar to the proof of Theorem
$4.1$ in \cite{AO}. }
\end{proof}

\subsection{{\protect\normalsize Sufficient maximum principle}}

\begin{theorem}
[Sufficient maximum principle]{\normalsize Let $\pi\in\mathcal{A}_{\mathbb{G}%
}$ with corresponding solutions $X(t),Y(t),Z(t),K(t,\cdot),p(t),q(t),r(t,\cdot
),\lambda(t)$. Assume the following conditions hold: }

\begin{description}
\item[$(i)$] {\normalsize
\begin{align}
& \nonumber\\
\mathbb{E}\left[  H\left(  t,\pi\right)  \Big\vert\mathfrak{G}_{t}\right]   &
=\underset{v\in U}{\sup}\mathbb{E}\left[  H\left(  t,v\right)
\Big\vert\mathfrak{G}_{t}\right]  , \label{EQ34}%
\end{align}
}
\end{description}

{\normalsize for all $t\in\lbrack0,\infty)$ a.s. }

\begin{description}
\item[$(ii)$] {\normalsize Transversality conditions%
\begin{equation}
\underset{T\rightarrow\infty}{\lim}\mathbb{E}\mathbf{\ }\left[  \hat
{p}(T)\left(  \hat{X}(T)-X(T)\right)  \right]  \leq0 \label{EQ35}%
\end{equation}%
\begin{equation}
\underset{T\rightarrow\infty}{\lim}\mathbb{E}\mathbf{\ }\left[  \hat{\lambda
}(T)\left(  \hat{Y}(T)-Y(T)\right)  \right]  \geq0\text{.} \label{EQ36}%
\end{equation}
}
\end{description}

{\normalsize Then $\pi$ is an optimal control for the problem $\left(
\ref{EQ30}\right)  .$ }
\end{theorem}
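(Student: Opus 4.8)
The plan is to mimic the finite-horizon sufficient maximum principle proved above, but with the fixed terminal time replaced by the localizing stopping times used in the proof of Lemma~\ref{lemma:JdifferentialH} together with a final passage $T\to\infty$ in which the transversality conditions \eqref{EQ35}--\eqref{EQ36} take over the role played, in finite horizon, by the terminal/initial conditions on $p$ and $\lambda$. I write $\hat\pi$ for the candidate control (its state and adjoint processes carrying hats, so that \eqref{EQ34} is maximality of the conditional Hamiltonian at $\hat\pi$ and \eqref{EQ35}--\eqref{EQ36} carry the signs shown), let $\pi$ be an arbitrary admissible control, and set $\Delta\phi:=\phi^{\pi}-\phi^{\hat\pi}$ for each state/adjoint process $\phi$; note $\Delta X(0)=0$. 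As in the finite-horizon theorem the argument also uses concavity of $h$ and of $(\mathbf x,\mathbf m,y,n,z,k,u)\mapsto H(t,\cdot,\hat p(t),\hat q(t),\hat r(t,\cdot),\hat\lambda(t))$, which I take to hold here as well. The goal is $J(\hat\pi)-J(\pi)\ge0$.

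First I would write $J(\hat\pi)-J(\pi)=-\mathbb E[\int_0^\infty \Delta f(t)\,dt+\Delta h]$, with $\Delta h:=h(Y^{\pi}(0))-h(\hat Y(0))$, and use the definition \eqref{haminf} of $H$ to express $\Delta f$ through $\Delta H$ and the terms $\hat p\,\Delta b$, $\hat q\,\Delta\sigma$, $\int_{\mathbb R_0}\hat r\,\Delta\gamma\,\nu(de)$ and $\hat\lambda\,\Delta g$. Concavity of $H$ bounds $\Delta H$ above by the first-order expansion $\nabla H(t,\hat\pi)$ applied to $(\Delta\mathbf X,\mathbb E[\Delta\mathbf X],\Delta Y,\mathbb E[\Delta Y],\Delta Z,\Delta K,\Delta\pi)$, where $\Delta\mathbf X(t)=(\int_{-\delta}^0\Delta X(t+s)\mu_i(ds))_i$ carries the delay, and concavity of $h$ gives $\Delta h\le \hat\lambda(0)\Delta Y(0)$ since $\hat\lambda(0)=h'(\hat Y(0))$.

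Next, in analogy with \eqref{eq:p_theta} and \eqref{eq:lambda_phi}, I would apply It\^o's formula to $\hat p(t)\Delta X(t)$ and to $\hat\lambda(t)\Delta Y(t)$ on $[0,\tau_n]$, where $\tau_n\uparrow T$ are localizing stopping times chosen as in the proof of Lemma~\ref{lemma:JdifferentialH} (with $\Delta X,\Delta Y,\Delta Z,\Delta K$ in place of the derivative processes), so that all stochastic integrals have zero mean. Using the adjoint dynamics \eqref{eq:pqr}--\eqref{eq:lambda}, the drift $-\mathbb E[\hat\Upsilon(t)\mid\mathfrak F_t]$ of $\hat p$ and the tower property turn $\mathbb E[\int \Delta X\,\hat\Upsilon\,dt]$ into the delayed mean-field expression \eqref{eq:Upsilon}. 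The crucial step is then the Fubini/change-of-variable identity established at the end of the proof of Lemma~\ref{lemma:JdifferentialH} (with $\Delta X$ in place of $\mathcal X$ and using $\Delta X(r)=0$ for $r<0$): it rewrites $\sum_i\int_{-\delta}^0\frac{\partial H}{\partial x_i}(t-s,\hat\pi)\mu_i(ds)\,\Delta X(t)$ as $\sum_i\frac{\partial H}{\partial x_i}(t,\hat\pi)\int_{-\delta}^0\Delta X(t+s)\mu_i(ds)$, and together with the tower identity $\mathbb E[\frac{\partial H}{\partial m_i}\,\mathbb E[\,\cdot\,]]=\mathbb E[\mathbb E[\frac{\partial H}{\partial m_i}]\,\cdot\,]$ this cancels exactly the $\partial H/\partial x_i$ and $\partial H/\partial m_i$ contributions coming from the concavity bound; the It\^o expansion of $\hat\lambda\,\Delta Y$ likewise cancels the $\partial H/\partial y,\partial H/\partial n,\partial H/\partial z,\nabla_kH$ and $\hat\lambda\,\Delta g$ terms. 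After these cancellations only the control term $\mathbb E[\int_0^{\tau_n}\frac{\partial H}{\partial\pi}(t,\hat\pi)\Delta\pi(t)\,dt]$ and the two boundary terms $\mathbb E[\hat p(\tau_n)\Delta X(\tau_n)]$, $\mathbb E[\hat\lambda(\tau_n)\Delta Y(\tau_n)]$ survive.

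Finally I would let $n\to\infty$ and then $T\to\infty$. The $L^2$-growth conditions \eqref{growth} and \eqref{decayp} furnish an integrable dominating function exactly as in \eqref{ineq:intCondNMP}, so dominated convergence removes $\tau_n$ and then extends the time integral to $[0,\infty)$. The two boundary terms are discarded using the transversality conditions: \eqref{EQ35} and \eqref{EQ36} are precisely the sign conditions making $\lim_T\mathbb E[\hat p(T)\Delta X(T)]$ and $\lim_T\mathbb E[\hat\lambda(T)\Delta Y(T)]$ drop on the favourable side. What remains is $J(\hat\pi)-J(\pi)\ge-\mathbb E[\int_0^\infty\frac{\partial H}{\partial\pi}(t,\hat\pi)\Delta\pi(t)\,dt]$; conditioning on $\mathfrak G_t$ and using that, by \eqref{EQ34} and convexity of $U$, $\hat\pi(t)$ maximizes $v\mapsto\mathbb E[H(t,v)\mid\mathfrak G_t]$, so that $\mathbb E[\frac{\partial H}{\partial\pi}(t,\hat\pi)(\pi(t)-\hat\pi(t))\mid\mathfrak G_t]\le0$, forces this term to be $\ge0$, proving optimality of $\hat\pi$ for \eqref{EQ30}. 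The main obstacle is this passage to infinity: with no terminal data fixing $\hat p(T),\hat\lambda(T)$, one must justify, solely from \eqref{EQ35}--\eqref{EQ36} and the exponential-weight integrability \eqref{growth}, \eqref{decayp}, that the boundary terms vanish with the correct sign, while simultaneously controlling the nested limits $n\to\infty$ and $T\to\infty$ through the dominated-convergence bookkeeping already carried out for the necessary principle.
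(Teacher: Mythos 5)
Your proposal is correct and takes essentially the same route as the paper: the paper's own ``proof'' is only a pointer to the finite-horizon argument (concavity of $H$ and $h$, It\^o's formula applied to $\hat p\,\Delta X$ and $\hat\lambda\,\Delta Y$, and the Fubini/change-of-variable identity for the delay terms) combined with the localization and $T\to\infty$ transversality machinery of Lemma \ref{lemma:JdifferentialH} and of Theorem 3.1 in \cite{AO}, which is precisely what you spell out. Your remark that concavity of $H$ and of $h$ must be assumed is also apt: the paper's statement of the theorem omits it, but the finite-horizon proof it invokes cannot run without it.
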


{\normalsize \begin{proof}
The proof is similar to the proof of Theorem 2.5 but with infinite time horizon and Theorem $3.1$ in \cite{AO}.
\end{proof}
}

\section{{\protect\normalsize Optimal consumption with respect to recursive
utility}}

{\normalsize Suppose now that the state equation is a cash flow on the form
\begin{equation}
\left\{
\begin{array}
[c]{l}%
dX(t)=\left[  b_{0}(t,\mathbb{E}[X(t)])-\pi(t)\right]  dt+\sigma
(t,\mathbf{X}(t),\mathbb{E[}X(t)],\pi(t))dB(t)\\
\quad\quad\quad\quad+\int_{\mathbb{R}_{0}}\mathcal{X}(t,\mathbf{X}%
(t),\mathbb{E[}X(t)],\pi(t),e)\tilde{N}(dt,de),\quad t\in\lbrack0,\infty),\\
X(t)=X_{0}(t),\quad t\in\left[  -\delta,0\right]  ,
\end{array}
\right.  \label{SDE}%
\end{equation}
where the control $\pi(t)\geq0$ represents a consumption rate. The function
$b_{0}$ is assumed to be deterministic, in addition to the assumptions from
the previous sections. We want to consider an optimal recursive utility
problem similar to the one in \cite{AO}. See also \cite{DE}. For notational
conveniencen assume that }$\mu_{0}${\normalsize is the Dirac measure
concentrated at }${\normalsize 0.}$

{\normalsize Define the recursive utility process $Y(t)=Y^{\pi}(t),$ by the
BSDE in the unknown processes $(Y,Z,K)=(Y^{\pi},Z^{\pi},K^{\pi})$, by
\begin{equation}%
\begin{array}
[c]{l}%
dY(t)=-g(t,\mathbf{X}(t),\mathbb{E}[X(t)],Y(t),\mathbb{E[}Y(t)],\pi
(t),\omega)dt+Z(t)dB(t)\\
+\int_{\mathbb{R}_{0}}K(t,e,\omega)\tilde{N}(dt,de),\quad t\in\lbrack
0,\infty).
\end{array}
\label{BSDE}%
\end{equation}
}

{\normalsize We assume that equation $\left(  \ref{EQ28}\right)  $ satisfies
$\left(  \ref{decayp}\right)  $ and for all finite $T$ this is equivalent to%
\begin{equation}
Y(t)=E\left[  \left.  Y(T)+\int_{t}^{T}g(s,\mathbf{X}(s),\mathbb{E}%
[X(s)],Y(s),\mathbb{E[}Y(s)],\pi(s))ds\right\vert \mathfrak{F}_{t}\right]
;t\leq T\leq\infty.
\end{equation}
}

{\normalsize Notice that the function $b_{0}$ from the state equation $\left(
\ref{EQ27}\right)  $ depends only on $\mathbb{E}[X(t)]$ and on the control
$\pi(t)$, and that the driver $g$ is independent of $Z$. We have put no
further restrictions on the coefficient functionals so far. Let $f=0,h=0$ and
$h_{1}(y)=y$, in particular, we want to maximize the performance functional
\[
J(\pi):=Y^{\pi}(0)=\mathbb{E[}Y^{\pi}(0)\mathbb{]}.
\]
The admissible controls are assumed to be the càdlàg, $\mathfrak{G}_{t}%
$-adapted non-negative processes in $L^{2}(\Omega\times\lbrack0,T]).$ }

{\normalsize The adjoint processes $(p,q,r)=(p^{\pi},q^{\pi},r^{\pi})$ and
$\lambda=\lambda^{\pi}$ corresponding to $\pi$ are defined by }

{\normalsize
\begin{equation}
dp(t)=-\mathbb{E}[\Upsilon(t)|\mathfrak{F}_{t}]dt+q(t)dB(t)+\int
_{\mathbb{R}_{0}}r(t,e)\tilde{N}(dt,de), \label{example:adEq1}%
\end{equation}
}

{\normalsize with}%

\[%
\begin{array}
[c]{l}%
\Upsilon(t)=\frac{\partial}{\partial x_{0}}b_{0}(t)+\mathbb{E}\left[
\frac{\partial}{\partial m_{0}}b_{0}(t)\right] \\
+\sum_{i=0}^{2}\int_{-\delta}^{0}\left\{  \frac{\partial H}{\partial x_{i}%
}\Big(t-s,\pi\Big)+\mathbb{E}\Big[\frac{\partial H}{\partial m_{i}%
}\Big(t-s,\pi\Big)\Big]\right\}  \mu_{i}(ds)
\end{array}
\]

{\normalsize and}

{\normalsize
\begin{equation}
\left\{
\begin{array}
[c]{l}%
d\lambda(t)=\lambda(t)\Big(\frac{\partial}{\partial y}g(t,\pi)+\mathbb{E}%
\Big[\frac{\partial}{\partial n}g(t,\pi)\Big]\Big)dt,t\in\lbrack0,\infty)\\
\lambda(0)=h_{1}^{\prime}(Y(0))=1.
\end{array}
\right.  \label{Eqlamda}%
\end{equation}
}

{\normalsize We assume that equation $\left(  \ref{example:adEq1}\right)  $
satisfies the decay condition $\left(  \ref{decayp}\right)  .$ }

{\normalsize The Hamiltonian for the forward-backward system is
\begin{align}
H(t,\pi)  &  =\big(b_{0}(t,\mathbb{E}[x])-\pi\big)p+\sigma(t,\pi)q\nonumber\\
&  \quad+\int_{\mathbb{R}_{0}}\gamma(t,\pi,e)r(t,e)\nu(de)+g(t,\pi)\lambda
\end{align}
and
\begin{align}
\frac{\partial}{\partial\pi}H\big(t,\pi\big)  &  =-p(t)+\frac{\partial
}{\partial\pi}\Big(\sigma(t,\pi(t))q(t)\nonumber\\
&  +\int_{\mathbb{R}_{0}}\gamma(t,\pi(t),e)r(t,e)\nu(de)+g(t,\pi
(t))\lambda(t)\Big). \label{hamhu}%
\end{align}
Now, applying the necessary maximum principle to the expression above yields
the following: }

\begin{corollary}
{\normalsize Suppose that $\hat{\pi}(t)$ is an optimal control. Then {\small
\begin{equation}
\mathbb{E}[\hat{p}(t)|\mathfrak{G}_{t}]=\mathbb{E}\Big[\frac{\partial
}{\partial\pi}\Big(\sigma(t,\hat{\pi}(t))\hat{q}(t)+\int_{\mathbb{R}_{0}%
}\gamma(t,\hat{\pi}(t),e)\hat{r}(t,e)\nu(de)+g(t,\hat{\pi}(t))\hat{\lambda
}(t)\Big)\big|\mathfrak{G}_{t}\Big]. \label{fin}%
\end{equation}
} }
\end{corollary}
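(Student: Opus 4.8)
The plan is to obtain $\left(\ref{fin}\right)$ as an immediate consequence of the Necessary maximum principle (the preceding Theorem), so the argument splits into two pieces: checking that the hypotheses of that Theorem hold in the present recursive-utility setting, and then substituting the explicit control-derivative of the Hamiltonian. First I would record that, with the choices $f=0$, $h=0$ and $h_{1}(y)=y$, the performance functional collapses to $J(\pi)=\mathbb{E}[Y^{\pi}(0)]$, and that the control-derivative of the Hamiltonian is given by $\left(\ref{hamhu}\right)$, namely
\[
\frac{\partial H}{\partial\pi}(t,\pi)=-p(t)+\frac{\partial}{\partial\pi}\Big(\sigma(t,\pi)q(t)+\int_{\mathbb{R}_{0}}\gamma(t,\pi,e)r(t,e)\nu(de)+g(t,\pi)\lambda(t)\Big).
\]

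Next I would verify the transversality/convergence requirements $\left(\ref{eq:p_theta_condition}\right)$ and $\left(\ref{eq:lambda_phi_condition}\right)$ demanded by Lemma \ref{lemma:JdifferentialH}. These follow from the integrability built into the system: the state obeys the growth bound $\left(\ref{growth}\right)$, the adjoint equation $\left(\ref{example:adEq1}\right)$ is assumed to satisfy the decay condition $\left(\ref{decayp}\right)$, and the derivative processes $\mathcal{X},\mathcal{Y}$ lie in the weighted $L^{2}$-spaces introduced for them. Pairing the exponential decay of $p$ and $\lambda$ against the controlled growth of $\mathcal{X}$ and $\mathcal{Y}$ through Cauchy--Schwarz forces $\mathbb{E}[p(T)\mathcal{X}(T)]\to0$ and $\mathbb{E}[\lambda(T)\mathcal{Y}(T)]\to0$ as $T\to\infty$, which is exactly what Lemma \ref{lemma:JdifferentialH} requires.

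With the hypotheses in force, part $(ii)$ of the Necessary maximum principle applies to the optimal $\hat{\pi}$, giving
\[
\mathbb{E}\Big[\frac{\partial H}{\partial\pi}(t,\hat{\pi})\Big|\mathfrak{G}_{t}\Big]=0 \quad\text{a.s., for each } t\in[0,\infty).
\]
Substituting the expression for $\partial H/\partial\pi$ displayed above and using linearity of conditional expectation, the term $-\hat{p}(t)$ separates off; moving it to the left-hand side yields precisely
\[
\mathbb{E}[\hat{p}(t)\mid\mathfrak{G}_{t}]=\mathbb{E}\Big[\frac{\partial}{\partial\pi}\Big(\sigma(t,\hat{\pi}(t))\hat{q}(t)+\int_{\mathbb{R}_{0}}\gamma(t,\hat{\pi}(t),e)\hat{r}(t,e)\nu(de)+g(t,\hat{\pi}(t))\hat{\lambda}(t)\Big)\Big|\mathfrak{G}_{t}\Big],
\]
which is $\left(\ref{fin}\right)$.

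The only genuinely non-routine step is the verification of $\left(\ref{eq:p_theta_condition}\right)$--$\left(\ref{eq:lambda_phi_condition}\right)$: one must confirm that the decay imposed on the adjoint processes is strong enough, relative to the at-most-quadratic growth of the derivative processes, to annihilate the boundary terms at infinity. Once the maximum principle is available, the substitution and rearrangement are purely algebraic, so I expect no further difficulty there.
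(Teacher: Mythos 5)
Your core argument is exactly the paper's: compute $\partial H/\partial\pi$ as in (\ref{hamhu}), invoke part $(ii)$ of the necessary maximum principle for the optimal $\hat{\pi}$, and rearrange the resulting identity $\mathbb{E}\big[\tfrac{\partial H}{\partial\pi}(t,\hat{\pi})\,\big|\,\mathfrak{G}_{t}\big]=0$ to isolate $\hat{p}(t)$, which gives (\ref{fin}). The paper does nothing more than this: the corollary is presented as an immediate application of the theorem, whose hypotheses (existence of the adjoint processes and the limits (\ref{eq:p_theta_condition})--(\ref{eq:lambda_phi_condition})) are carried as standing assumptions in this section alongside (\ref{growth}) and (\ref{decayp}).

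The one place where you go beyond the paper is your claimed verification of (\ref{eq:p_theta_condition})--(\ref{eq:lambda_phi_condition}) by Cauchy--Schwarz, and that step, as written, does not go through. The conditions (\ref{decayp}) and (\ref{growth}) give exponential decay only for $p$ and $\mathcal{Y}$ (via the $\sup_{t\geq0}e^{\kappa t}(\cdot)^{2}$ terms); for $\mathcal{X}$ and $\lambda$ they give only integrability in time, $\mathbb{E}\big[\int_{0}^{\infty}|\mathcal{X}(t)|^{2}dt\big]<\infty$ and $\mathbb{E}\big[\int_{0}^{\infty}|\lambda(t)|^{2}dt\big]<\infty$. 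Time-integrability yields $\liminf_{T\rightarrow\infty}\mathbb{E}[|\mathcal{X}(T)|^{2}]=0$ along some sequence, but no pointwise control of $\mathbb{E}[|\mathcal{X}(T)|^{2}]$ as $T\rightarrow\infty$: it is consistent with these assumptions that $\mathbb{E}[|\mathcal{X}(T)|^{2}]$ has spikes growing faster than $e^{\kappa T}$ on shrinking intervals, in which case $\mathbb{E}[p(T)^{2}]^{1/2}\,\mathbb{E}[\mathcal{X}(T)^{2}]^{1/2}$ need not tend to $0$; the same issue affects the pairing of the merely integrable $\lambda$ with the decaying $\mathcal{Y}$. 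So Cauchy--Schwarz alone does not force the transversality limits. Fortunately the corollary does not require you to prove them: like the paper, you may carry them as hypotheses inherited from the necessary maximum principle (this is the content of ``under the assumptions of Lemma \ref{lemma:JdifferentialH}''). Note that when the paper does want genuine decay, in the explicit example with $g(t,\pi)=-\alpha Y(t)+\beta\mathbb{E}[Y(t)]-\ln\pi$, it verifies it by hand from closed-form expressions for $\hat{p}$ and $\hat{\lambda}$, not by a soft integrability argument.
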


{\normalsize {\small We see that if we can put additional conditions on the
forward-backward system such that $\hat{q}=0,\hat{r}=0,$ $\hat{\lambda}$ is
deterministic with $\hat{\lambda}>0$ and that }$\mathfrak{G}_{t}%
=\mathfrak{F}_{t},$\ then $(\ref{fin})$\ reduces to\
\begin{equation}
\frac{\hat{p}(t)}{\hat{\lambda}(t)}=\frac{\partial}{\partial\pi}g(t,\hat{\pi
}). \label{maxcond}%
\end{equation}
}

\begin{example}
{\normalsize Suppose that the following condition holds: }
\end{example}

{\normalsize $g$ is independent of $\mathbf{x}$ and $m$, for example let us
take\
\[
g(t,\pi)=-\alpha Y(t)+\beta\mathbb{E}[Y(t)]-\ln\pi.
\]
}

{\normalsize Then $(p,0,0)$ where $p$ solves the deterministic equation
\[
\hat{p}(t)=\hat{p}(T)-\int_{t}^{T}\frac{\partial}{\partial m}b_{0}(s,m)p(s)ds
\]
}

{\normalsize solves $(\ref{example:adEq1}),$for all finite\ $T$. Now by
$(\ref{maxcond})$ we deduce that
\begin{equation}
\hat{\pi}(t)=\frac{-\hat{\lambda}(t)}{\hat{p}(t)} \label{pihut}%
\end{equation}
}

{\normalsize with%
\begin{equation}
\frac{\partial}{\partial\pi}g(t,\pi)=\frac{-1}{\pi(t)}. \label{pi}%
\end{equation}
}

{\normalsize Consequently%
\begin{equation}
\lambda(t)=e^{-\left(  \alpha-\beta\right)  t}\text{ for all }t\in
\lbrack0,\infty). \label{lamda}%
\end{equation}
}

{\normalsize Combining $(\ref{pi})$ and\ $(\ref{maxcond}),$\ if $\pi$ is
bounded away from $0$ we have%
\begin{equation}
\hat{p}(T)=\frac{e^{-\left(  \alpha-\beta\right)  T}}{\hat{\pi}(T)}%
\underset{T\rightarrow\infty}{\rightarrow}0\text{ if }\beta<\alpha.
\label{phut}%
\end{equation}
}

{\normalsize Put $\pi=0$ in equation $(\ref{SDE}),$ integrating and taking
expectation, we obtain%
\[
h(t):=\mathbb{E}\left[  X(t)\right]  =x+\int_{0}^{t}b_{0}(s,\mathbb{E}%
[X(s)])ds.
\]
}

{\normalsize First we assume that $b_{0}$ has at most linear growth, in the
sense that there exists a constant $c$ such that $b_{0}(t,x)\leq cx.$ Then we
get%
\[
h(t)\leq x+c\int_{0}^{t}h(s)ds
\]
}

{\normalsize and hence by the Gronwall inequality it follows that
\begin{equation}
h(t)\leq xe^{ct}\text{ for all }t. \label{h}%
\end{equation}
}

{\normalsize For given consumption rate $\pi,$ let $X^{\pi}(t)$ be the
corresponding solution of $(\ref{SDE}).$ Then since $\pi(t)\geq0$ for all $t,$
we always have $X^{\pi}(t)$ $\leq X^{0}(t).$ Therefore, to prove the
transversality condition it suffices to prove that $\mathbb{E}\left[  \hat
{p}(T)X^{0}(T)\right]  $ goes to $0$ as $T$ goes to infinity. }

{\normalsize Let us compare $(\ref{h})$ with the decay of $\hat{p}%
(T\mathbf{)}$ in $(\ref{phut})$ we get}

{\normalsize
\begin{align}
\mathbb{E}\left[  \hat{p}(T)X^{0}(T)\right]   &  =\mathbb{E}\left[  \hat
{p}(T)X(T)\right] \nonumber\\
&  =\hat{p}(T)\mathbb{E}\left[  X(T)\right] \nonumber\\
&  =\frac{xe^{-\left(  \alpha-\beta-c\right)  T}}{\pi}\underset{T\rightarrow
\infty}{\rightarrow}0\text{ if }c<\left(  \alpha-\beta\right)  .
\end{align}
}

{\normalsize \include{InfiniteHorizonMFBSDE} }

\end{document}